\title{On small balanceable, strongly-balanceable and omnitonal graphs}
\date{}
\begin{document}
\newtheorem{theoremnum}{Theorem}[section]
\newtheorem*{lemma*}{Lemma}
\newtheorem{theorem}{Theorem}
\renewcommand*{\thetheorem}{\Alph{theorem}}

\newtheorem{definition}{Definition}[section]
\newtheorem{observation}[theoremnum]{Observation}
\newtheorem{proposition}[theorem]{Proposition}
\newtheorem{corollary}[theoremnum]{Corollary}
\newtheorem{lemma}[theorem]{Lemma}
\newcommand{\bal}{{\rm bal}}
\newcommand{\sbal}{{\rm sbal}}
\newcommand{\ot}{{\rm ot}}
\newcommand{\ex}{{\rm ex}}
\newtheorem{Ex}[theoremnum]{$\rhd$ Example}

\DeclareGraphicsExtensions{.pdf,.png,.jpg}

\author{Yair Caro \\ Department of Mathematics\\ University of Haifa-Oranim \\ Israel \and Josef  Lauri\\ Department of Mathematics \\ University of Malta
\\ Malta \and Christina Zarb \\Department of Mathematics \\University of Malta \\Malta }

\maketitle

\begin{abstract}

In Ramsey theory for graphs we are given a graph $G$ and we are required to find the least $n_0$ such that, for any $n\geq n_0$, any red/blue colouring of the edges of $K_n$ gives a subgraph $G$ all of whose edges are blue or all are red. Here we shall be requiring that, for  any red/blue colouring of the edges of $K_n$, there must be a copy of $G$  such that its edges are partitioned equally as red or blue (or the sizes of the colour classes differs by one in the case when $G$ has an odd number of edges). This introduces the notion of balanceable graphs and the balance number of $G$ which, if it exists, is the minimum integer bal$(n, G)$ such that, for any red/blue colouring of $E(K_n)$ with more than bal$(n, G)$ edges of either colour, $K_n$ will contain a balanced coloured copy of $G$ as described above. 
	
	This parameter was introduced by Caro, Hansberg and Montejano in   \cite{2018arXivCHM}. There, the authors also introduce the strong balance number sbal$(n,G)$ and the more general omnitonal number ot$(n, G)$ which requires copies of $G$ containing a complete distribution of the number of red and blue  edges over $E(G)$.
	
	In this paper we shall catalogue bal$(n, G)$, sbal$(n, G)$ and ot$(n,G)$ for all graphs $G$ on at most four edges. We shall be using some of the key results of Caro et al, which we here reproduce in full, as well as some new results which we prove here.  For example, we shall prove that the union of two bipartite graphs with the same number of edges is always balanceable.
\end{abstract}
\section{Introduction}

The problem we consider here, introduced in \cite{2018arXivCHM}, lies in the intersection of several graph theory problems such as Ramsey Theory, Extremal Graph Theory (Turan numbers) and Zero-sum Ramsey Theory.  It can be described as follows:  we first suppose that there is a 2-edge-colouring $f:E(K_n) \to \{red,blue\}$, and we denote by $R=R_f$ and $B=B_f$ the set of edges of $K_n$ coloured red or blue respectively. For short we shall also denote by $R$ and $B$ the graphs induced by the edge sets $R$ and $B$, respectively.  A subgraph $G$ of such a coloured complete graph is said to be $(r,b)$-coloured if $r$ edges of $G$ are coloured red and $b$ edges are coloured blue with $r+b=e(G)$, where $e(G)$ denotes the number of edges of $G$. We denote by  $deg_{red}(v)$   the number of  vertices adjacent to $v$ by a red edge, $N_{red}(v) = \{ u : uv \mbox{ is a red edge} \}$ so that $deg_{red}(v)=|N_{red}(v)|$, while $N_{red}[v]= N_{red}(v) \cup \{v \}$.   For other standard graph theoretic notation we refer to \cite{west2017introduction}.


In  Ramsey Theory for graphs, we require that one of $R$ or $B$ contains a particular graph, say a complete graph, and we ask what is the smallest value of $n$ such that any 2-edge-colouring $f$ gives us the graph we want either in $R$ or in $B$. In this  paper, inspired by   \cite{2018arXivCHM}, the graphs we are searching for will be required to have some particular mix of colours on its edge-set.  There are three main problems we consider:

\subsubsection*{Balanceable Graphs}

For a given graph $G$ we say that the colouring contains a \emph{ balanced} copy of $G$ if $f$ induces a coloured copy of $G$ in which the number of edges in each colour is equal (if $G$ has an even number of edges) or differs by one.

We therefore let, for $n \geq |V(G)|$,  $\bal(n,G)$ be the minimum integer, if it exists, such that any 2-edge-colouring of $K_n$ with $\min\{|R|,|B|\} > \bal(n,G)$ contains a balanced copy of $G$.   If $\bal(n,G)$ exists for every sufficiently large $n$, we say that $G$ is balanceable.

If $G$ has an odd number of edges we then introduce the notion of \emph{strong balance}.  That is, for $n \geq |V(G)|$, we let $\sbal(n,G)$ be the minimum integer, if it exists, such that any 2-edge-colouring of $K_n$  with $\min\{|R|,|B|\} > \sbal(n,G)$ contains  both a $(\lfloor \frac{e(G)}{2} \rfloor,\lceil \frac{e(G)}{2} \rceil)$-coloured  and $(\lceil \frac{e(G)}{2} \rceil ,\lfloor \frac{e(G)}{2} \rfloor)$-coloured copy of $G$. If $\sbal(n,G)$ exists for every sufficiently large $n$, we say that $G$ is strongly-balanceable.


\subsubsection*{Omnitonal Graphs}

\emph{Omnitonal} graphs are those graphs $G$ for which different copies of $G$ appear in a 2-edge-coloured complete graph such that all the copies carry between them all possible distributions of the two colours on the edges of $G$.  More formally, we define, for a given graph $G$,  and  for $n \geq |V(G)|$, $\ot(n,G)$ to be the minimum integer, if it exists, such that  any 2-edge-colouring of $K_n$ with $\min\{|R|,|B|\} > \ot(n,G)$ contains an $(r,b)$-coloured copy of $G$ for any $r \geq 0$ and $b \geq 0$ such that $r+b=e(G)$.  If $\ot(n,G)$ exists for every sufficiently large $n$, we say that $G$ is omnitonal.

A source of motivation  for \cite{2018arXivCHM}, that belongs to the recent developments in zero-sum extremal problems,  is the close connection between the concepts of balanceable and omnitonal graphs and zero-sum problems with weights over $\{ -p,q \}$  and in particular over $\{ -1 ,1\}$  (see remark 1.3 in \cite{2018arXivCHM}) \cite{augspurger2016avoiding,berger2019analogue,bowen2019colored,caro2019amoeba,2018arXivCHM,caro2019zero2,caro2019zero1,caro2016ero,girao2019tur,robertson2018zerosum2,robertson2018zerosum1,sun2019zero}.

\medskip 

\noindent There are two main goals  as background to the present paper:  the first one is to compute the functions $\sbal(n,G)$, $\bal(n,G)$  and $\ot(n,G)$  for all the graphs with up to 4 edges, checking which  of the  results already obtained in \cite{2018arXivCHM} can be applied in this task.  When  no such result from \cite{2018arXivCHM} does the job we give our complementary ad-hoc theorems that  allow us to complete the various tables. 

The second goal of this paper is to find out, while working on completing the tables, whether we can gain some further insight not covered in \cite{2018arXivCHM}.  In fact we have found at least one such instance.  It is mentioned in \cite{2018arXivCHM} that  not all bipartite graphs are balanceable (or strongly balanceable).  We    nevertheless here prove that  if $G$ and $H$ are bipartite graphs with $e(G) = e(H)$, then  $G \cup  H$ is balanceable.  We remark that  several theorems about the union of balanceable or omnitonal graphs are known (and will  probably appear in [3]), but all of them assume that at least one of the graphs is  balanceable or omnitonal. 

\medskip

\noindent Our paper is organized as follows:

In section 2  we collect the theorems proved in \cite{2018arXivCHM} which we need here and also give our complementary results that  allow us to complete the first task   - namely to compute $\sbal(n,G)$, $\bal(n,G)$  and $\ot(n,G) $  for all the graphs on up to  four edges.

 In section 3 we  prove the union theorem  mentioned above, and give an example to illustrate its use.  We then introduce the triple property, and use this,  Theorem \ref{thm:ramsey} and the fact that the graphs $tK_2$ are amoebas (to be defined later), to show  that  for  $n \geq 7t  - 1$, $\sbal( n, (2t-1)K_2)  = \bal(n,  2tK_2)  =  \bal(n, (2t+1)K_2)  =  (t -1)(n-t+1)  + \binom{t-1}{2}.$

Section 4 contains  the tables  with the results of our computations.

\section{Theorems and Further Definitions}

In this section we  state those theorems from \cite{2018arXivCHM} which we shall use in the presentation of our results, and some of the definitions required to state these theorems.  We shall also present a few results  which do not appear in \cite{2018arXivCHM}.

For notation not defined here we refer the reader to \cite{west2017introduction}.  We here note that we shall oftern denote the edge $\{a,b\}$ by $ab$.

\subsection{Known Results}

The results presented here  are  taken from \cite{2018arXivCHM} and  \cite{caro2019amoeba}.

\begin{theorem}[Theorem 2.8 in \cite{2018arXivCHM}] \label{omnibip}
Omnitonal graphs are bipartite.
\end{theorem}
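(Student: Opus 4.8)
The plan is to prove the contrapositive: if $G$ is not bipartite, then $G$ is not omnitonal. The leverage point is that omnitonality demands, among all the $(r,b)$ distributions with $r+b=e(G)$, the two monochromatic ones; in particular it demands an all-red copy, i.e. the distribution $(e(G),0)$. So it suffices to exhibit, for every sufficiently large $n$, a 2-edge-colouring of $K_n$ in which the red graph cannot contain $G$ at all, while $\min\{|R|,|B|\}$ simultaneously attains the largest value it can possibly take over all colourings. Such a colouring forces any candidate threshold $\ot(n,G)$ up to the degenerate (vacuous) maximum, so no meaningful $\ot(n,G)$ can exist.

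The construction I would use is as follows. Since $G$ is not bipartite it contains an odd cycle, hence $G$ is a subgraph of no bipartite graph. Partition $V(K_n)$ into two parts $A,B$ of sizes $\lfloor n/2\rfloor$ and $\lceil n/2\rceil$. The complete bipartite graph between $A$ and $B$ has $\lfloor n/2\rfloor\lceil n/2\rceil$ edges; I would colour exactly $\lceil \binom{n}{2}/2\rceil$ of these cross edges red and all remaining edges of $K_n$ blue. The red graph is then a subgraph of a complete bipartite graph, hence itself bipartite, hence free of odd cycles and therefore free of any copy of $G$.

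The two facts to record are then immediate: (i) this colouring misses the distribution $(e(G),0)$, since there is no all-red copy of $G$; and (ii) it satisfies $\min\{|R|,|B|\}=\lfloor \binom{n}{2}/2\rfloor$, which is the maximum value of $\min\{|R|,|B|\}$ attainable by any 2-colouring of $K_n$. Because no colouring can have $\min\{|R|,|B|\}$ strictly larger than this, the omnitonal property can never be genuinely forced, so $\ot(n,G)$ fails to exist for every large $n$, and $G$ is not omnitonal. By contraposition, every omnitonal graph is bipartite.

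I expect the only delicate point is the bookkeeping around the phrase ``$\ot(n,G)$ exists'': the construction must genuinely achieve the \emph{maximal} value of $\min\{|R|,|B|\}$, not merely a large value, so that the forced threshold is truly degenerate. This reduces to checking the inequality $\lfloor n/2\rfloor\lceil n/2\rceil \geq \lceil \binom{n}{2}/2\rceil$, which guarantees that enough cross edges are available to be coloured red while keeping the two colour classes as balanced as $K_n$ permits; a short case split on the parity of $n$ settles it. Everything else — that a bipartite red graph contains no odd cycle and hence no non-bipartite $G$ — is routine.
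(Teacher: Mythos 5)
Your proof is correct, and its core mechanism is exactly the one behind the paper's cited result (the paper reproduces this theorem from \cite{2018arXivCHM} without proof, but the intended argument is visible in its machinery): a non-bipartite $G$ has no all-red copy when the red edges span a bipartite graph, so a colouring of this shape with $\min\{|R|,|B|\}$ at its absolute maximum $\lfloor\binom{n}{2}/2\rfloor$ makes the threshold $\ot(n,G)$ degenerate, hence non-existent in the non-vacuous sense the paper uses (this is the same convention the paper invokes right after Lemma~\ref{typecolouring} to rule out $C_4$ as omnitonal and $K_3$ as strongly balanceable, via the tone $(e(G),0)$).

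The one genuine difference is in how the extremal colouring is built. The paper's route (via Lemma~\ref{typecolouring}) takes the red graph to be a \emph{complete} bipartite graph $K_{t,n-t}$ and needs $t=t(n)$ chosen so that $t(n-t)=\binom{n}{2}/2$ exactly, a number-theoretic condition that holds only for infinitely many $n$ --- which suffices, since omnitonality demands existence of $\ot(n,G)$ for all sufficiently large $n$. You instead colour red an arbitrary set of $\lceil\binom{n}{2}/2\rceil$ cross edges of the balanced bipartition, after checking $\lfloor n/2\rfloor\lceil n/2\rceil\geq\lceil\binom{n}{2}/2\rceil$ (your parity check is right: for $n=2k$ one compares $k^2$ with $k^2-\lfloor k/2\rfloor$, and for $n=2k+1$ one compares $k^2+k$ with $k^2+\lceil k/2\rceil$). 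This buys you a failing colouring for \emph{every} $n$, not just infinitely many, and makes the argument self-contained, with no appeal to the balanced type-$B(t)$ existence lemma. You were also right to flag that attaining the exact maximum of $\min\{|R|,|B|\}$, rather than merely a large value, is the load-bearing point; that is precisely what separates ``$\ot(n,G)$ is large'' from ``$\ot(n,G)$ does not exist.''
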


\begin{theorem}[Theorem 2.10 in \cite{2018arXivCHM}]
Every tree  is omnitonal.
\end{theorem}

An edge replacement is defined as follows:  given a graph $G$ embedded in a complete graph $K_n$ where $ n \geq |V(G)|$, we say that $H \simeq G$ (also embedded in $K_n$) is obtained from $G$ by an {\em edge-replacement} if there is some $e_1 \in E(G)$ and $e_2 \in E(K_n)\backslash E(G)$, such that $H=(G-e_1)+e_2$.   A graph $G$ is called an {\em amoeba} if there exists $n_0=n_0(G) \geq |V(G)|$, such that for all $n \geq n_0$ and any two copies $F$ and $H$ of $G$ in $K_n$, there is a chain $F=G_0,G_1,\ldots,G_t=H$ such that for every $i \in \{1,2,\ldots,t\}$, $G_i\simeq G$ and $G_i$ is obtained from $G_{i-1}$ by an edge-replacement.  For example,  it is easy to see that $tK_2$ is an amoeba  for $t \geq 1$,  and a little more effort reveals that  $P_k$  the path on $k$ vertices is also an amoeba.  A paper developing in depth structures and properties of amoebas is under preparation \cite{caro2019amoeba}.


For a given graph $G$, we denote by $R(G, G)$ the 2-colour Ramsey number, that is, the minimum integer $R(G, G)$ such that, whenever $n \geq R(G, G)$, any 2-edge-colouring of $E(K_n) = E(R) \cup
E(B)$ contains either a red or a blue copy of $G$. For a given graph $G$, we denote by $\ex(n, G)$
the Turan number for $G$, that is, the maximum number of edges in a graph with $n$ vertices
containing no copy of $G$ \cite{bollobas2004extremal,furedi2013history}.

\begin{theorem}[Theorem 2.14 in \cite{2018arXivCHM}] \label{bipam}
There is some $n_0=n_0(G)$ such that every bipartite amoeba $G$  is omnitonal with $\ot(n,G)=\ex(n,G)$ provided $n$ is large enough to fulfil $\binom{n}{2} \geq 2\ex(n,G)+1$ and $n\geq n_0$.
\end{theorem}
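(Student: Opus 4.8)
The plan is to prove the equality $\ot(n,G)=\ex(n,G)$ by establishing matching lower and upper bounds; omnitonality then follows automatically, since exhibiting a finite value of $\ot(n,G)$ for all large $n$ is exactly what the definition of omnitonal requires. Throughout I take $n\geq n_0(G)$ so that the amoeba chain property of $G$ is available, and I use the hypothesis $\binom{n}{2}\geq 2\ex(n,G)+1$ both to guarantee that colourings with $\min\{|R|,|B|\}>\ex(n,G)$ actually exist and, dually, to ensure the extremal construction below leaves the complementary colour in the majority.

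For the lower bound $\ot(n,G)\geq\ex(n,G)$, I would colour red the edges of an extremal $G$-free graph on $n$ vertices, so that $|R|=\ex(n,G)$ and no monochromatically red copy of $G$ exists. By the hypothesis $\binom{n}{2}\geq 2\ex(n,G)+1$ we get $|B|=\binom{n}{2}-\ex(n,G)\geq\ex(n,G)+1$, hence $\min\{|R|,|B|\}=\ex(n,G)$; yet the $(e(G),0)$-coloured copy is missing. This shows that $\min\{|R|,|B|\}=\ex(n,G)$ does not suffice, giving $\ot(n,G)\geq\ex(n,G)$.

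For the upper bound $\ot(n,G)\leq\ex(n,G)$, suppose $\min\{|R|,|B|\}>\ex(n,G)$. Since $|R|>\ex(n,G)$, the red graph contains a copy $F$ of $G$, which is $(e(G),0)$-coloured; likewise the blue graph contains a copy $H$, which is $(0,e(G))$-coloured. Because $G$ is an amoeba and $n\geq n_0(G)$, there is a chain $F=G_0,G_1,\ldots,G_t=H$ of copies of $G$ in which each $G_i$ is obtained from $G_{i-1}$ by a single edge-replacement. Writing $r_i$ for the number of red edges of $G_i$, a single edge-replacement deletes one edge and inserts one edge, so $|r_i-r_{i-1}|\leq 1$. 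As $r_0=e(G)$ and $r_t=0$, a discrete intermediate-value argument shows that every integer $r$ with $0\leq r\leq e(G)$ equals some $r_i$; the corresponding $G_i$ is an $(r,e(G)-r)$-coloured copy of $G$. Hence all distributions occur, proving the upper bound and the equality.

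The cleanest part is the discrete intermediate-value step: the amoeba property is tailored precisely so that the red count moves in unit steps between the two monochromatic extremes. The main point to watch is the interplay between bipartiteness and the size condition. Bipartiteness is forced a posteriori by Theorem \ref{omnibip}, but it also enters a priori through $\binom{n}{2}\geq 2\ex(n,G)+1$: for a non-bipartite $G$ the Erd\H{o}s--Stone bound gives $\ex(n,G)\geq(\tfrac12+c)\binom{n}{2}$ for large $n$, so the size condition eventually fails and the statement becomes vacuous, whereas for bipartite $G$ one has $\ex(n,G)=o(n^2)$ and the condition holds for all large $n$. I would therefore flag that the hypotheses serve exactly to let both monochromatic copies coexist while keeping the extremal lower-bound construction admissible.
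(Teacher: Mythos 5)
Your proof is correct and takes the intended approach: the present paper only quotes this theorem from \cite{2018arXivCHM} without reproducing its proof, and the argument there is precisely your two bounds --- the extremal $G$-free colouring (red $=$ Tur\'an graph) for $\ot(n,G)\geq\ex(n,G)$, and, for the upper bound, the existence of a fully red and a fully blue copy together with the amoeba chain, along which the red-edge count changes by at most one per edge-replacement, so a discrete intermediate-value argument realises every tone. One minor inaccuracy in your closing aside (not a gap in the proof): for $\chi(G)=3$ the Erd\H{o}s--Stone theorem gives $\ex(n,G)=\left(\tfrac12+o(1)\right)\binom{n}{2}$, not $\ex(n,G)\geq\left(\tfrac12+c\right)\binom{n}{2}$ for a fixed $c>0$; the cleaner way to see that the size hypothesis fails for non-bipartite $G$ is the elementary bound $\ex(n,G)\geq\ex(n,K_3)=\lfloor n^2/4\rfloor$, which already exceeds $\frac12\left(\binom{n}{2}-1\right)$.
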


\begin{theorem}[Theorem 2.15 in \cite{2018arXivCHM}] \label{ambal}
Every amoeba $G$  is balanceable/strongly balanceable.
\end{theorem}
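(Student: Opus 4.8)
The plan is to treat the number of red edges in a copy of $G$ as a discrete quantity that changes slowly along an amoeba chain, and then apply a discrete intermediate-value argument. First I would record the following elementary observation: if a copy $H'$ is obtained from a copy $H$ by a single edge-replacement that deletes $e_1$ and inserts $e_2$, then the number of red edges of the copy changes by $+1$, $0$ or $-1$, according to the colours of $e_1$ and $e_2$. Hence along any amoeba chain $G_0,G_1,\dots,G_t$ the integer sequence $r_i := |E(G_i)\cap R|$ satisfies $|r_{i+1}-r_i|\le 1$. Consequently, if the chain starts at a copy with $r_0\ge \lceil e(G)/2\rceil$ and ends at a copy with $r_t\le \lfloor e(G)/2\rfloor$, then $(r_i)$ must assume \emph{every} integer value in the interval $[\lfloor e(G)/2\rfloor,\lceil e(G)/2\rceil]$. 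For even $e(G)$ this already yields a balanced copy; for odd $e(G)$ it yields copies realising both $(\lceil e(G)/2\rceil,\lfloor e(G)/2\rfloor)$ and $(\lfloor e(G)/2\rfloor,\lceil e(G)/2\rceil)$, which is exactly strong balanceability.

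The crux is therefore to produce, for a suitable threshold on $\min\{|R|,|B|\}$, one copy $F$ of $G$ with at least $\lceil e(G)/2\rceil$ red edges and one copy $H$ of $G$ with at least $\lceil e(G)/2\rceil$ blue edges; the amoeba property (valid for $n\ge n_0(G)$) then supplies a chain from $F$ to $H$ and the observation above finishes the argument. To build $F$, I would fix once and for all a \emph{bipartite} subgraph $G_0\subseteq G$ with $e(G_0)\ge \lceil e(G)/2\rceil$; such a subgraph exists by the max-cut bound, since the expected number of edges cut by a random bipartition of $V(G)$ is $e(G)/2$, so some bipartition cuts at least $\lceil e(G)/2\rceil$ edges. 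If the red graph $R$ contains a copy of $G_0$ --- which is guaranteed as soon as $|R| > \ex(n,G_0)$ --- then, because $K_n$ is complete, that red copy of $G_0$ extends to a copy of $G$ in $K_n$ by mapping the remaining vertices of $G$ to any unused vertices; this copy has at least $e(G_0)\ge \lceil e(G)/2\rceil$ red edges. Symmetrically, $|B| > \ex(n,G_0)$ yields the copy $H$ with at least $\lceil e(G)/2\rceil$ blue edges.

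It then remains to check that the resulting threshold is admissible, and this is where the bipartiteness of $G_0$ is essential. Since $G_0$ is bipartite, classical bounds on Tur\'an numbers of bipartite graphs give $\ex(n,G_0)=o(n^2)$, so for all sufficiently large $n$ we have $\binom{n}{2} > 2\,\ex(n,G_0)+1$; hence a colouring with $\min\{|R|,|B|\} > \ex(n,G_0)$ is possible, and every such colouring produces both $F$ and $H$. Taking the threshold to be $\ex(n,G_0)$ thus shows that $\bal(n,G)$, and $\sbal(n,G)$ when $e(G)$ is odd, exists for all large $n$, so $G$ is balanceable and strongly balanceable. The step I expect to be the main obstacle is precisely this guarantee of the two extreme copies: one cannot in general force monochromatic copies of all of $G$, because for non-bipartite $G$ one may be unable to make both colour classes exceed $\ex(n,G)\sim(1-\tfrac{1}{\chi(G)-1})\binom{n}{2}$. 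Passing to a bipartite half-subgraph $G_0$ and completing it arbitrarily inside $K_n$ is what circumvents this difficulty and keeps the threshold well below $\tfrac12\binom{n}{2}$.
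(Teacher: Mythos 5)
Your proof is correct and follows essentially the same route as the original argument for Theorem 2.15 in \cite{2018arXivCHM}: a discrete intermediate-value argument along an amoeba chain, with the two endpoint copies supplied by a monochromatic bipartite subgraph $G_0\subseteq G$ with $e(G_0)\ge\lceil e(G)/2\rceil$ (max-cut), found via the Tur\'an threshold $\ex(n,G_0)=o(n^2)$ and then extended arbitrarily to a copy of $G$ inside $K_n$. Both the interpolation step and the handling of the odd case (hitting both tones, giving strong balanceability) are exactly as in the cited proof, so there is nothing to add.
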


The final two results we state consider stars.

\begin{theorem}[Theorem 3.2 in \cite{2018arXivCHM}]
Let $k \geq 2$ and $n$ be integers with $k$ even and such that $n \geq \max\{3,\frac{k^2}{4}+1\}$.  Then \[\bal(n,K_{1,k}) = n\left (\frac{k}{2}-1 \right ) - \frac{k^2}{8} +\frac{k}{4}.\]
\end{theorem}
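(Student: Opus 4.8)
The plan is to translate the balance condition into a degree condition and then solve the resulting extremal problem. Writing $s=k/2$, a copy of $K_{1,k}$ centred at a vertex $v$ is balanced precisely when $v$ has at least $s$ red and at least $s$ blue incident edges; hence a colouring contains no balanced $K_{1,k}$ if and only if every vertex $v$ satisfies $\deg_{red}(v)\le s-1$ or $\deg_{blue}(v)\le s-1$. Consequently $\bal(n,K_{1,k})$ is exactly the maximum value of $\min\{|R|,|B|\}$ over all such ``bad'' colourings, and I must show this maximum equals $(s-1)(n-\tfrac s2)$ (one checks this equals the stated expression).

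For the lower bound I exhibit a bad colouring. Fix a set $S$ of $s-1$ vertices, colour red every edge meeting $S$ and blue every edge inside $V\setminus S$. Each vertex of $S$ then has blue-degree $0$ and each vertex outside $S$ has red-degree exactly $|S|=s-1$, so no vertex sees $s$ edges of each colour and the colouring is bad. Counting gives $|R|=\binom{s-1}2+(s-1)(n-s+1)=(s-1)(n-\tfrac s2)$, while $|B|=\binom{n-s+1}2$, which is the larger of the two for $n$ in the stated range; hence $\min\{|R|,|B|\}=(s-1)(n-\tfrac s2)$ and so $\bal(n,K_{1,k})\ge (s-1)(n-\tfrac s2)$.

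For the matching upper bound, take any bad colouring and partition $V=A\cup\bar A$ with $A=\{v:\deg_{blue}(v)\le s-1\}$; by the characterisation every $v\in\bar A$ has $\deg_{red}(v)\le s-1$. Swapping the two colours if necessary (this preserves $\min\{|R|,|B|\}$ and badness), I may assume $p:=|A|\le n/2$. The first key step is a feasibility constraint coming from the edges between $A$ and $\bar A$: every red such edge is counted once in the red-degree budget $\sum_{\bar A}\deg_{red}\le (n-p)(s-1)$ of $\bar A$, and every blue such edge once in the blue-degree budget $\sum_A\deg_{blue}\le p(s-1)$ of $A$; since all $p(n-p)$ cross-edges are coloured, $p(n-p)\le (n-p)(s-1)+p(s-1)=n(s-1)$. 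The hypothesis $n\ge \tfrac{k^2}4+1=s^2+1$ makes $p=s$ violate this (because $s(n-s)>n(s-1)\iff n>s^2$), and as $p(n-p)$ is increasing on $[0,n/2]$ this together with $p\le n/2$ forces $p\le s-1$.

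The second key step is to bound $|R|$ sharply once $p$ is small. Splitting the red edges into those inside $A$, those inside $\bar A$, and those across, the first number is at most $\binom p2$, while the red budget of $\bar A$ gives $2\rho_{\bar A\bar A}+\rho_{A\bar A}\le (n-p)(s-1)$ for the numbers $\rho_{\bar A\bar A},\rho_{A\bar A}$ of red edges inside $\bar A$ and across. Eliminating $\rho_{\bar A\bar A}$ and using $\rho_{A\bar A}\le p(n-p)$ yields $|R|\le \binom p2+\tfrac12(n-p)(s-1+p)$, an increasing function of $p$, so for $p\le s-1$ we get $|R|\le \binom{s-1}2+(n-s+1)(s-1)=(s-1)(n-\tfrac s2)$. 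Since $\min\{|R|,|B|\}\le |R|$, every bad colouring satisfies $\min\{|R|,|B|\}\le (s-1)(n-\tfrac s2)$, giving the upper bound and completing the proof. I expect the main obstacle to be exactly this interplay: the naive per-colour degree bounds are far too weak on their own, and one must combine the total edge count (to force $p$ small, which is where the threshold $n\ge k^2/4+1$ is used) with the factor-$2$ ``cost'' of edges internal to $\bar A$ in the red budget in order to pin down the extremal value.
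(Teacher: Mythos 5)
This statement is one the paper quotes verbatim as Theorem 3.2 of \cite{2018arXivCHM} and does not prove; there is therefore no in-paper proof to compare against, and your proposal must stand on its own. Having checked it, I find it correct and complete. The reduction of balancedness to the degree condition ($v$ centres a balanced $K_{1,k}$ iff $\deg_{red}(v)\geq s$ and $\deg_{blue}(v)\geq s$, with $s=k/2$) is right; the extremal colouring (red edges forming the graph of all edges meeting a fixed $(s-1)$-set) is indeed bad and achieves $\min\{|R|,|B|\}=(s-1)(n-\tfrac{s}{2})$ in the stated range; and the two counting steps in the upper bound are sound: the cross-edge budget $p(n-p)\leq n(s-1)$ correctly forces $p\leq s-1$ exactly when $n\geq s^2+1$, and the bound $|R|\leq\binom{p}{2}+\tfrac{1}{2}(n-p)(s-1+p)=\tfrac{1}{2}\bigl(n(s-1)+p(n-s)\bigr)$ is increasing in $p$ and evaluates to $(s-1)(n-\tfrac{s}{2})$ at $p=s-1$. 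One step you should make explicit: the colour-swap that lets you assume $|A|\leq n/2$ is only legitimate because in the stated range no vertex can have \emph{both} colour-degrees at most $s-1$ (since $\deg_{red}(v)+\deg_{blue}(v)=n-1>2(s-1)$), so the low-red set and the low-blue set are exact complements and swapping colours really does exchange $|A|$ with $n-|A|$; without this observation the swapped set could in principle again exceed $n/2$. Similarly, the assertion that $|B|=\binom{n-s+1}{2}$ dominates $|R|$ in the extremal colouring deserves the one-line verification (it reduces to $(n-s+1)(n-s)\geq(s-1)(2n-s)$, which holds for $n\geq s^2+1$). These are presentational, not structural, gaps: the argument as a whole is a valid proof of the stated formula, and it is the natural degree-partition approach one would expect for this result.
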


\begin{theorem}[Theorem 4.1 in \cite{2018arXivCHM}] \label{omnistar}
Let $n$ and $k$ be positive integers such that $n \geq 4k$.  Then \[
\ot(n,K_{1,k}) =
\begin{cases}
\left \lfloor \frac{(k-1)}{2}n \right \rfloor , & \text{for } k\leq 3,\\
(k-2)n - \frac{k^2}{2}+\frac{3}{2}k-1 & \text{for } k \geq 4.
\end{cases}
\]
\end{theorem}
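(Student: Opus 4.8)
The plan is to translate the statement about copies of $K_{1,k}$ into one about the red/blue degree sequence and then optimise. A copy of $K_{1,k}$ centred at a vertex $v$ realises the distribution $(r,k-r)$ precisely when one can select $r$ red and $k-r$ blue edges at $v$, i.e. when $deg_{red}(v)\ge r$ and $deg_{blue}(v)\ge k-r=n-1-deg_{red}(v)$. Thus $v$ ``covers'' exactly the interval of values $r\in[\max(0,k-deg_{blue}(v)),\min(k,deg_{red}(v))]$, and the colouring is omnitonal if and only if these intervals, over all $v$, cover $\{0,1,\dots,k\}$. Equivalently, the colouring \emph{fails} omnitonality iff there is a value $r^{*}$ such that every vertex satisfies $deg_{red}(v)\le r^{*}-1$ or $deg_{red}(v)\ge n-k+r^{*}$. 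So I must compute, for each missed value $r^{*}$, the largest possible $\min\{|R|,|B|\}$, and then maximise over $r^{*}$.

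For the lower bounds I would exhibit, in each range of $k$, a colouring missing one prescribed $r^{*}$ whose minority colour class already meets the claimed value. For $k\le 3$ take the red graph to be a graph of maximum degree $k-1$ with $\lfloor (k-1)n/2\rfloor$ edges and colour the rest blue; then no all-red star exists, so $r^{*}=k$ is missed, and $n\ge 4k$ forces $|B|\ge|R|$, giving $\min\{|R|,|B|\}=\lfloor (k-1)n/2\rfloor$. For $k\ge 4$ take a set $H$ of $k-2$ vertices, colour red every edge meeting $H$ (so $H$ is a red clique joined completely in red to the rest) and colour all remaining edges blue. Every vertex outside $H$ then has red degree exactly $k-2$ and every vertex of $H$ has blue degree $0$, so no vertex can host a $(k-1,1)$-star and $r^{*}=k-1$ is missed; a direct count gives $|R|=\binom{k-2}{2}+(k-2)(n-k+2)=(k-2)n-\tfrac{k^{2}}{2}+\tfrac{3}{2}k-1$, and since $n\ge 4k$ we have $|B|=\binom{n-k+2}{2}\ge|R|$, as needed.

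For the upper bound I would assume the colouring misses some $r^{*}$ (with $1\le r^{*}\le k-1$, the cases $r^{*}\in\{0,k\}$ being the extreme ones) and partition $V(K_n)$ into $L=\{v:deg_{red}(v)\le r^{*}-1\}$ and $H=\{v:deg_{blue}(v)\le k-r^{*}-1\}$. Counting the $L$--$H$ edges two ways (each vertex of $L$ sends at most $r^{*}-1$ red edges to $H$, each vertex of $H$ sends at most $k-r^{*}-1$ blue edges to $L$) gives $|L|\,|H|\le (r^{*}-1)|L|+(k-r^{*}-1)|H|\le (k-2)n$, whence $\min\{|L|,|H|\}\le 2k-4$. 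So one part is bounded independently of $n$; by the colour symmetry $r^{*}\leftrightarrow k-r^{*}$ assume it is $L$, with $|L|=z$. Writing $|B|=B_L+B_{HL}+B_H$ and using $B_L\le\binom{z}{2}$, $B_{HL}\le z(n-z)$ and $2B_H+B_{HL}\le(n-z)(k-r^{*}-1)$ yields $|B|\le\binom{z}{2}+\tfrac12(n-z)\bigl(z+k-r^{*}-1\bigr)$, while $H$ being almost red-complete makes $|R|$ grow quadratically in $n$, so $\min\{|R|,|B|\}=|B|$. Maximising this over the admissible $z$ and over $r^{*}$, the optimum occurs at $z=k-r^{*}-1$ with $r^{*}\in\{1,k-1\}$, giving $\phi(k-1)=(k-2)n-\binom{k-1}{2}$; comparing with the extreme cases $r^{*}\in\{0,k\}$, which give $\min\{|R|,|B|\}\le\lfloor(k-1)n/2\rfloor$, the larger of the two wins and reproduces exactly the two branches of the formula.

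The main obstacle I anticipate is this final optimisation. One must verify that the bound on $|B|$ is genuinely maximised at the integer value $z=k-r^{*}-1$ used in the constructions: the feasibility constraint that actually caps $z$ is that the blue $L$--$H$ edges forced by the low red degree of $L$ must still fit within the blue-degree budget of $H$, i.e. $z(n-z-(r^{*}-1))\le (n-z)(k-r^{*}-1)$, and confirming that for every $n\ge 4k$ (rather than only $n\gg k^{2}$) this both excludes the intermediate regime where $L$ and $H$ are simultaneously large and forces the integer optimum, is where the care is needed. The accompanying comparison $\phi(k-1)$ versus $\lfloor(k-1)n/2\rfloor$, whose crossover is precisely at $k=4$, then delivers the case split in the statement.
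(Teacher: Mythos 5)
First, a point of reference: the paper you are being compared against does not prove this statement at all --- it is Theorem 4.1 of \cite{2018arXivCHM}, quoted there verbatim and used as a black box --- so your proposal has to stand on its own. Most of it does. The tone characterisation (a colouring misses tone $r^*$ iff every vertex $v$ has $deg_{red}(v)\le r^*-1$ or $deg_{red}(v)\ge n-k+r^*$) is correct, and both lower-bound constructions are correct and yield exactly the two branches: for $k\ge 4$ the count $\binom{k-2}{2}+(k-2)(n-k+2)=(k-2)n-\frac{k^2}{2}+\frac{3}{2}k-1$ checks out, and the extreme tones $r^*\in\{0,k\}$ do give the $\lfloor (k-1)n/2\rfloor$ branch.

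The gap is the upper bound, and it is exactly the step you defer. Your bound $\phi=\binom{z}{2}+\frac{(n-z)(z+b)}{2}$, with $b=k-r^*-1$, is \emph{linear and increasing} in $z$ (slope $\frac{n-1-b}{2}$), so the only cap you actually derive, $\min\{|L|,|H|\}\le 2k-4$, proves nothing: at $z=2k-4$, $r^*=1$, it gives roughly $\frac{3}{2}(k-2)n$, far above the target $(k-2)n-\binom{k-1}{2}$. Thus the feasibility constraint you merely write down is not a refinement ``where care is needed''; it carries the entire weight of the theorem, and as submitted it is never used, so the upper bound is unproven. The step does not, however, fail; here is how to close it with your own ingredients. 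Put $a=r^*-1$, $z=b+c$; your constraint rearranges to $c(n-k+2-c)\le ab$, and since $z\le n/2$ excludes the large root of this quadratic, it forces $c\le \frac{2ab}{n-k+2}$. By linearity, $|B|\le \psi(b)+c\cdot\frac{n-1-b}{2}\le \psi(b)+\frac{abn}{n-k+2}$, where $\psi(b)=\binom{b}{2}+(n-b)b$ is your bound at $z=b$. Since $\psi(b+1)-\psi(b)=n-b-1\ge n-k+2$, the target equals $\psi(k-2)\ge \psi(b)+a(n-k+2)$; and $a(n-k+2)>\frac{abn}{n-k+2}$ because $(n-k+2)^2\ge \frac{9n^2}{16}>bn$ (using $b\le k-2\le n/4$). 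Hence the constrained maximum sits at $a=0$, i.e.\ $r^*=1$ (or symmetrically $r^*=k-1$) and $z=b=k-2$, which is precisely your claimed optimum; comparing with $\lfloor (k-1)n/2\rfloor$ then gives the stated formula with crossover at $k=4$. So the plan is sound and the constructions are right, but the proof as written stops short of its decisive step.
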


The following is a simple lemma about amoebas that is observed in \cite{caro2019amoeba}.  We give the proof for completeness:

 \begin{lemma}[Lemma in \cite{caro2019amoeba}] \label{lemmadeg}

Let $G$ be an amoeba without isolated vertices.  Then $\delta(G) = 1$ and, for every  k, $1 \leq k \leq \Delta(G)$, there is a vertex $v$  in $G$ with $deg(v) = k$. 
\end{lemma}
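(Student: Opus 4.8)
The plan is to deduce both conclusions from a single observation, since $\delta(G)=1$ is just the $k=1$ instance of the degree claim together with the hypothesis that $G$ has no isolated vertex (which forces $\delta(G)\ge 1$). So it suffices to show: for every integer $k$ with $1\le k\le\Delta(G)$, the graph $G$ has a vertex of degree $k$.

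The engine of the proof is the elementary fact that an edge-replacement perturbs degrees very gently. First I would record the sublemma: if $H=(G'-e_1)+e_2$ is obtained from $G'$ by a single edge-replacement, then for every fixed vertex $w$ we have $|\deg_H(w)-\deg_{G'}(w)|\le 1$. Indeed, exactly one edge is removed and exactly one is added, so $w$'s degree goes down by $1$ if $w$ is an endpoint of $e_1$ but not $e_2$, up by $1$ if it is an endpoint of $e_2$ but not $e_1$, and is unchanged otherwise (in particular when $w$ is an endpoint of both). The step I expect to need the most care is precisely this last bookkeeping, namely confirming the bound in the degenerate case where $e_1$ and $e_2$ share an endpoint, but the net change there is $0$, so the bound holds in all cases.

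With this in hand the argument is a discrete intermediate-value sweep. Let $m=|V(G)|$ and take $n\ge\max\{n_0(G),\,m+1\}$, so that the amoeba property applies and there is room to move the copy off any single vertex. Fix a copy $F\cong G$ in $K_n$ and choose a vertex $x\in V(F)$ with $\deg_F(x)=\Delta(G)$. Since $n\ge m+1$, there is a copy $H\cong G$ whose (non-isolated) support avoids $x$, so that $\deg_H(x)=0$. By the amoeba property there is a chain $F=G_0,G_1,\dots,G_t=H$ with each $G_i\simeq G$ and $G_i$ obtained from $G_{i-1}$ by an edge-replacement. Consider the integer sequence $a_i:=\deg_{G_i}(x)$. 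It starts at $a_0=\Delta(G)$, ends at $a_t=0$, and by the sublemma satisfies $|a_i-a_{i-1}|\le 1$ for all $i$.

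A sequence of integers that moves from $\Delta(G)$ to $0$ in steps of size at most $1$ must assume every intermediate value; hence for each $k$ with $0\le k\le\Delta(G)$ there is an index $i$ with $a_i=\deg_{G_i}(x)=k$. For $k\ge 1$ this exhibits a vertex of degree $k$ inside $G_i$, and since $G_i\simeq G$ their degree multisets coincide, so $G$ itself has a vertex of degree $k$. Applying this for every $k\in\{1,\dots,\Delta(G)\}$ gives the second assertion, and the case $k=1$ combined with $\delta(G)\ge 1$ yields $\delta(G)=1$, completing the proof.
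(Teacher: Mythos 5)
Your proof is correct and takes essentially the same approach as the paper's: both arguments track the degree of a single fixed vertex along the amoeba chain between a copy in which that vertex has degree $\Delta(G)$ and a copy in which it has degree $0$, observe that each edge-replacement changes this degree by at most one, and conclude by a discrete intermediate-value argument that every value $1,\dots,\Delta(G)$ occurs as a degree in some copy of $G$. The only differences are cosmetic: the paper runs the chain upward from the degree-$0$ vertex, while you run it downward from the maximum-degree vertex, and you make the one-step degree bound and the intermediate-value sweep explicit.
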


\begin{proof}
Let $G$ be an amoeba.  Consider $K_n$ where $n$ sufficiently large and let $H$ be a copy of $G$  such that $H$ and $G$ are vertex disjoint in $K_n$. Let $v$ be a vertex in $G$ such that $deg(v) = \Delta(G)$.  Suppose $u$ is the vertex in $H$  to which $v$ is to arrive via edge-replacement.
 
So initially $deg(u) = 0$ and clearly the first time an edge is replaced to be incident with $u$, $deg(u) = 1$ ---  but that means that in $G$ there is a vertex of degree equal to 1. Now as the process of edge-replacements that carry $v$ to $u$  continues,  then every time the degree of $u$  can only increase by 1. So all the numbers between 1 and $\Delta(G)$ are present as degrees of $u$  along the process, and for each $k$,  $1 \leq  k \leq \Delta$, there must therefore be a vertex in $G$ that has degree $k$.
\end{proof}

Finally, we require the following definitions as described in \cite{2018arXivCHM}.  Let $t$ and $n$ be integers with $1 \leq t < n$. A 2-edge-coloured complete graph $K_n$ is said to be of \emph{type-$A(t)$} if the edges of one of the colours induce a complete graph $K_t$, and it is of \emph{type-$B(t)$} if the edges of one of the colours induce a complete bipartite graph $K_{t,n-t}$.  A \emph{type-$A(t)$} \emph{colouring} (respectively  \emph{type-$B(t)$} \emph{colouring}) of $K_n$ is called \emph{balanced} if the number of red edges equals the number of blue edges.
 The following lemma is used here when we consider whether the graph $C_4$ is omnitonal and $K_3$  strongly balanceable.

\begin{lemma}[Lemma 3.1 and Lemma 3.2 in \cite{caro2019zero2}, Lemma 2.3 in \cite{2018arXivCHM}] \label{typecolouring}
\leavevmode
\begin{enumerate}
\item{For infinitely many positive integers $n$, we can choose $t = t(n)$ in a way that the type-A(t) colouring of $K_n$ is balanced.}
\item{ For infinitely many positive integers $n$, we can choose $t = t(n)$ in a way that the type-B(t) colouring of $K_n$ is balanced.}
\end{enumerate}
\end{lemma}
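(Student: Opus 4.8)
The plan is to turn each balance requirement into a Diophantine condition on $n$ (together with the auxiliary parameter $t$) and then exhibit infinitely many solutions; I would treat the two parts separately, since they lead to number-theoretic problems of quite different difficulty. For part (1), suppose the red edges induce the $K_t$. Then there are exactly $\binom{t}{2}$ red edges and $\binom{n}{2}-\binom{t}{2}$ blue edges, so the colouring is balanced precisely when $2\binom{t}{2}=\binom{n}{2}$, that is,
\[
2t(t-1)=n(n-1).
\]
First I would complete the square on both sides via the substitution $x=2t-1$, $y=2n-1$; since $4t(t-1)=x^2-1$ and $4n(n-1)=y^2-1$, the displayed equation becomes
\[
y^2-2x^2=-1,
\]
the classical negative Pell equation.

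The key step for part (1) is then to invoke the standard fact that $y^2-2x^2=-1$ is solvable and has infinitely many positive solutions: the fundamental solution is $(x,y)=(1,1)$, and the remaining ones are generated inside $\mathbb{Z}[\sqrt{2}]$ by repeatedly multiplying $1+\sqrt{2}$ by the fundamental unit $3+2\sqrt{2}$ of norm $+1$. The final thing to verify is that these solutions really deliver integers $t,n$: from $y^2=2x^2-1$ one sees that $y$ is odd, and reducing modulo $8$ forces $x$ to be odd as well, so $t=(x+1)/2$ and $n=(y+1)/2$ are positive integers for every solution. The smallest nontrivial instance $(x,y)=(5,7)$ gives $(t,n)=(3,4)$, where the red $K_3$ and the blue $K_4-K_3$ indeed each carry $3$ edges. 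As the Pell equation has infinitely many solutions with $n\to\infty$, this produces infinitely many $n$ as required.

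For part (2), suppose instead that the red edges induce the $K_{t,n-t}$, so there are $t(n-t)$ red edges and balance reads $2t(n-t)=\binom{n}{2}$, i.e.
\[
4t(n-t)=n(n-1).
\]
Here I would simply solve for $t$ as a quadratic: $4t^2-4nt+n(n-1)=0$ yields $t=\tfrac12\bigl(n\pm\sqrt{n}\bigr)$. Thus an integer solution exists exactly when $n$ is a perfect square, say $n=m^2$, in which case $t=\tfrac12 m(m\pm1)=\binom{m}{2}$ or $\binom{m+1}{2}$ is automatically an integer. A direct substitution confirms balance (for $n=m^2$ and $t=\binom{m}{2}$ one gets $t(n-t)=\tfrac14 m^2(m^2-1)=\tfrac12\binom{n}{2}$), and since there are infinitely many squares $n=m^2$ this settles part (2).

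I expect the only genuine obstacle to be part (1): the whole argument rests on the solvability of the negative Pell equation $y^2-2x^2=-1$, which is a real (if classical) number-theoretic input rather than a routine manipulation, whereas part (2) is entirely elementary once the balance equation is written down and the discriminant is inspected.
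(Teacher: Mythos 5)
Your proposal is correct and complete: the balance conditions do reduce to $2t(t-1)=n(n-1)$ for type-$A(t)$ and $4t(n-t)=n(n-1)$ for type-$B(t)$, the substitution $x=2t-1$, $y=2n-1$ turns the first into the negative Pell equation $y^2-2x^2=-1$ (with the parity check guaranteeing integrality of $t$ and $n$), and the discriminant argument correctly shows the second is solvable exactly when $n$ is a perfect square. Note that the paper does not prove this lemma at all --- it imports it from the cited references --- so your argument stands on its own; it is the standard derivation (Pell solutions such as $(t,n)=(3,4),(15,21),\dots$ for type-$A$, and $n=m^2$ with $t=\binom{m}{2}$ for type-$B$) that underlies the quoted result.
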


Hence, such a type-$B(t)$  balanced colouring prevents an $(r,b)$-colouring of $C_4=K_{2,2}$ where $r +b = 4$ and where both $r$ and $b$ are odd, showing that $C_4$ is not omnitonal.

Also, such a type-$A(t)$ balanced colouring with edges forming the induced $K_t$ prevents a $(2,1)$-coloured $K_3$, showing that $K_3$ is not strongly balanced.

\begin{observation}
It is a  well-known simple fact  that the only graphs on at least three edges  containing no two independent edges are $K_3 $ and $K_{1,k}$ for $k \geq 3$.  Therefore if a graph on $n \geq 4$ vertices has at least $n$ edges then it must have at least one pair of independent edges.
\end{observation}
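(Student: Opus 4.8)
The plan is to first establish the structural classification and then read off the edge-count statement as a corollary. Since isolated vertices change neither the edge set nor the presence of independent edges, I will work with the graph on its non-isolated vertices while still counting its edges against the ambient $n$.

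Assume $G$ has at least three edges and no two independent edges; equivalently, every pair of edges of $G$ shares a vertex. Fix one edge $e_1 = ab$. Every other edge must meet $e_1$, and hence contains $a$ or $b$. The first step is a clean dichotomy: if every edge of $G$ contains $a$ (or every edge contains $b$), then $G$ is a star centred at that vertex, i.e. $G = K_{1,k}$, and $e(G) \geq 3$ forces $k \geq 3$. Otherwise there is an edge avoiding $a$, which must therefore be of the form $bc$, and an edge avoiding $b$, of the form $ad$. Since these two edges are required to meet and neither $c=a$ nor $d=b$ is possible, we are forced to have $c=d$, so that $\{a,b,c\}$ spans a triangle.

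The second step handles this triangle case. Once $abc$ is present, I claim $G$ has no further edge: any edge incident to a vertex outside $\{a,b,c\}$ is disjoint from at least one edge of the triangle, contradicting the hypothesis, and $\{a,b,c\}$ carries no edge beyond the three already present. Hence $G = K_3$. Combining the two steps gives the claimed list: $K_3$ and $K_{1,k}$ with $k \geq 3$ are the only graphs with at least three edges and no two independent edges.

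Finally I derive the edge bound by contrapositive, showing that a graph on $n \geq 4$ vertices with no two independent edges has at most $n-1$ edges. If it has at most two edges this is immediate; otherwise it is $K_3$ or some $K_{1,k}$ by the classification. A triangle contributes $3 \leq n-1$ edges when $n \geq 4$, while a star $K_{1,k}$ occupies $k+1 \leq n$ vertices and so has $k \leq n-1$ edges. In every case $e(G) \leq n-1 < n$, so having at least $n$ edges forces a pair of independent edges. I expect no serious obstacle; the only delicate point is the meeting-edge argument that pins the two off-$e_1$ edges down to $bc$ and $ac$, and hence produces the triangle rather than some other configuration.
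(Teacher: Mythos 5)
Your proof is correct, and it is worth noting that the paper itself offers no argument at all for this observation: it is asserted as a ``well-known simple fact'' and left as folklore. Your write-up therefore supplies details the paper omits, and it does so along the standard lines. The fixed-edge dichotomy (every edge meets $ab$; either all edges pass through one endpoint, giving a star, or one finds edges $bc$ and $ad$ whose forced intersection yields $c=d$ and hence a triangle) is exactly the right mechanism, and your saturation step --- any edge touching a vertex outside $\{a,b,c\}$ misses one of the triangle's edges --- correctly rules out anything beyond $K_3$. The contrapositive edge count is also handled cleanly: you remembered that $K_{1,k}$ occupies $k+1\leq n$ vertices so $k\leq n-1$, you checked the boundary case $n=4$ where the triangle attains $3=n-1$, and you dealt with isolated vertices up front, which is the one point a careless reading of the classification would miss (a graph with no two independent edges need only be $K_3$ or a star \emph{up to} isolated vertices). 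In short: a complete elementary proof of a claim the paper takes on faith, with no gaps.
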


\subsection{New Results}
We now give some direct proofs of results which were not covered by the above theorems.

\begin{theoremnum}  \label{omnithm}
For $n \geq 10$, \ot$(n,K_{1,3} \cup  K_2)  = n$.
\end{theoremnum}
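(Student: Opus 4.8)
Write $G = K_{1,3}\cup K_2$ and fix $n \ge 10$. The plan is to prove $\ot(n,G) \ge n$ and $\ot(n,G) \le n$ separately, the common engine being the Tur\'an number $\ex(n,G) = n$, which I would establish first. To see $\ex(n,G) \le n$, let $H$ be a $G$-free graph and argue by $\Delta(H)$. If $\Delta(H) \le 2$ then $H$ is a union of paths and cycles, so $e(H) \le n$. If $\Delta(H) = 3$, pick $v$ with $\deg(v)=3$; the unique claw at $v$ forbids any edge disjoint from $N[v]$, so $N[v]$ is a vertex cover of size $4$ and $e(H) \le \sum_{x \in N[v]}\deg(x) - e(H[N[v]]) \le 12 - 3 = 9 \le n$. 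If $\Delta(H) \ge 4$, a vertex $v$ of maximum degree has enough neighbours that any edge inside $V\setminus N[v]$, and any edge from $N(v)$ to $V\setminus N[v]$, would complete a forbidden $G$; hence $V \setminus N[v]$ is isolated. For $\Delta=4$ this confines $H$ to the five vertices of $N[v]$, so $e(H)\le 10 \le n$ (this is where $n\ge 10$ enters), while for $\Delta\ge 5$ additionally $N(v)$ is independent, so $H$ is a star with $e(H)\le n-1$. Since the $n$-cycle is $G$-free, $\ex(n,G)=n$.

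The same $n$-cycle gives the lower bound: colour a Hamiltonian cycle blue and the remaining $\binom n2 - n$ edges red. Then $\min\{|R|,|B|\}=n$, yet the blue graph is $2$-regular and so contains no blue $K_{1,3}$, hence no $(0,4)$-coloured $G$; therefore $\ot(n,G)\ge n$. For the upper bound assume $\min\{|R|,|B|\}\ge n+1$. As both colour classes then exceed $\ex(n,G)=n$, each contains a monochromatic $G$, which realises the $(4,0)$- and $(0,4)$-distributions. By the red/blue symmetry it remains to produce $(3,1)$- and $(2,2)$-coloured copies, and each such copy is naturally assembled from a claw of a prescribed colour split together with a vertex-disjoint edge of a prescribed colour (for instance a $(3,1)$-copy is a red claw plus a disjoint blue edge).

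For the $(3,1)$-distribution I would assume no red claw has a disjoint blue edge, so that the four vertices of every red claw form a vertex cover of the blue graph $B$. Let $w$ maximise $\deg_R$ and put $d=\deg_R(w)$; since $|R|\ge n+1$ we have $d\ge 3$. If $d\ge 5$ then any blue edge avoiding $w$ misses some red claw centred at $w$ (there are enough spare red-neighbours to dodge its two endpoints), forcing every blue edge through $w$ and $|B|\le n-1$, a contradiction. If $d=4$ a short argument shows every blue edge either meets $w$ or lies inside $N_R(w)$, whence every vertex outside $N_R[w]$ is red-joined to all but $w$, giving red-degree $\ge n-2 > \Delta_R$. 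If $d=3$ then $|R|\le \lfloor 3n/2\rfloor$, so $|B|\ge n(n-4)/2$, while the size-$4$ cover $N_R[w]$ forces $|B|\le 4n-10$; these are incompatible for $n\ge 11$, and the residual case $n=10$ collapses because it would make $B$ a $6$-regular graph on $10$ vertices, which has no independent set of size $6$ and hence no vertex cover of size $4$. This settles $(3,1)$, and $(1,3)$ follows by symmetry.

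The $(2,2)$-distribution is handled in the same spirit, the extra flexibility being that a $(2,2)$-copy may be built either from a $(2,1)$-claw plus a disjoint blue edge or from a $(1,2)$-claw plus a disjoint red edge. If some vertex carries at least two edges of each colour one runs the cover argument above simultaneously for both colours; otherwise every vertex is red-heavy or blue-heavy, which partitions $V$ into a red-heavy part $A$ and a blue-heavy part $C$, and one locates the required mixed claw inside the appropriate part and the disjoint edge inside the other. The main obstacle throughout is precisely this coordination of the disjoint edge with the claw: nothing prevents all edges of one colour from clustering on the four vertices of a given claw, and it is exactly here that the hypotheses $n\ge 10$ and $\min\{|R|,|B|\}\ge n+1$ are used to force a contradictorily small vertex cover. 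I note finally that this ad hoc analysis seems unavoidable via Theorem \ref{bipam}, since $G$ has no vertex of degree $2$ and so, by Lemma \ref{lemmadeg}, is not an amoeba.
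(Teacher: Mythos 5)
Your lower bound, your computation of $\ex(n,K_{1,3}\cup K_2)=n$, and your handling of the $(4,0)$, $(0,4)$, $(3,1)$, $(1,3)$ distributions are all correct, and they follow a genuinely different route from the paper: the paper obtains the monochromatic copies by a bare-hands case analysis on $\deg_{red}(v)\in\{3,4,\ge 5\}$, whereas you get both at once from $\min\{|R|,|B|\}\ge n+1>\ex(n,G)$; and your $(3,1)$ argument (``the four vertices of any red claw form a vertex cover of the blue graph,'' then split on the maximum red degree $d\ge 5$, $d=4$, $d=3$, with the $6$-regularity contradiction settling $n=10$) checks out line by line. This part is a clean alternative to the paper's Case 1 and Case 2.

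The genuine gap is the $(2,2)$ case, which you only sketch, and which is precisely where the paper works hardest (its Case 3). The first half of your sketch is completable as stated: if some vertex $v$ has red neighbours $a_1,a_2$ and blue neighbours $b_1,b_2$, then the absence of a $(2,2)$-copy forces every blue edge to meet $\{v,a_1,a_2,b_1\}$ and every red edge to meet $\{v,a_1,b_1,b_2\}$, so every edge of $K_n$ meets a fixed $5$-set, giving $\binom{n}{2}\le 5n-15$, false for $n\ge 7$. But the second half --- ``every vertex is red-heavy or blue-heavy \dots and one locates the required mixed claw inside the appropriate part and the disjoint edge inside the other'' --- is not an argument, and the construction it gestures at is not how this case actually closes. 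What closes it is that the case is vacuous: let $A$ be the vertices with $\deg_B\le 1$ and $C$ those with $\deg_R\le 1$; these sets are disjoint (a common vertex would have degree at most $2<n-1$) and by assumption cover $V$. Every $A$--$C$ edge is either blue, hence one of at most $|A|$ blue edges incident to $A$, or red, hence one of at most $|C|$ red edges incident to $C$; thus $|A|\,|C|\le |A|+|C|=n$, which for $n\ge 10$ forces $\min\{|A|,|C|\}\le 1$. If $|C|\le 1$ then every blue edge meets $A$, so $|B|\le |A|\le n$, contradicting $|B|\ge n+1$ (and symmetrically if $|A|\le 1$). Until this, or some equivalent argument, is supplied, your proof of the upper bound is incomplete for the $(2,2)$ tone, so the theorem as a whole is not yet established by your proposal.
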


\begin{proof}

\noindent {\em Lower bound:} For $n \geq 10$  we have to show a  colouring  with $\min \{ |R|, |B| \}  = n$ and some $r ,b \geq 0$  such that $r+b = 4$  but no  copy of $K_{1,3} \cup  K_2$ has $r$ red edges and $b$ blue edges.

So, for $n \geq 10$, any colouring in which  the red colour forms a 2-factor in $K_n$ avoids a red $K_{1,3}$, hence avoids a red $K_{1,3} \cup  K_2$.
Observe that, for $n = 9$, a colouring in which the red edges form  a clique $K_5$  and the rest are blue avoids a red $K_{1,3} \cup  K_2$  but has 10 red edges hence the restriction to  $n \geq 10$  is necessary.   

\medskip\noindent
{\em Upper bound:}  We have to show that,  for $n \geq 10$,  any colouring of $E(K_n)$  in which $\min \{ |R|, |B| \}  \geq n+1$  contains an $(r,b)$-coloured  copy of $K_{1,3} \cup  K_2$  for any choice of $r, b \geq 0$ such that $r +b  = 4$.

\medskip\noindent
{\em Case 1:  The existence of a $( 4,0 )$-coloured copy  (the proof for a $(0,4)$-coloured copy follows by symmetry)} 

Clearly as $|R|  \geq n +1$  there is a vertex $v$  with $\deg_{red}(v)  \geq 3$.  Let $X$ be the subgraph of $K_n$ induced by the vertex-set $V(K_n)\backslash N_{red}[v]$.  We consider three possible cases:

\medskip\noindent
{\em (i) $\deg_{red}(v)  \geq 5$}

If there is a red edge in $V(K_n)  \backslash \{ v\}$  then clearly we have red $K_{1,3} \cup  K_2$,  because we always have at least three vertices in $N_{red}(v)$ disjoint from the vertices of this red edge.

 Hence the only red edges in $E(K_n)$ are those incident with $v$  and it follows that  $|R| \leq n-1$,  a contradiction.


\medskip\noindent
{\em (ii) $\deg_{red}(v)  =4$}

Clearly $|X| \geq 5$.  If there is a red edge from $N_{red}(v)$ to  $X$ we are done. So all red edges are contained in $N_{red}[v] $.  But then, $|N_{red}[v] | = 5$ hence $|R|  \leq 10  < n +1$ since $n \geq 10$.

\medskip\noindent
{\em (iii) $\deg_{red}(v) = 3$}

Clearly we may assume that all edges in $X$ are blue otherwise we are done and also  $|X| = n-4 \geq 6$.  
Since no vertex has red degree  at least four (otherwise we are back to case ii), it follows that  the four vertices in $N_{red}[v]$ are incident with at most 9 red edges altogether, a contradiction since $n \geq 10$.

\medskip\noindent
{\em Case 2 : The existence of a $( 3,1)$-coloured copy (the proof for 
a $( 1,3)$-coloured copy follows by symmetry)}

In fact we will show the existence of $(3,1)$-coloured copy with a red $K_{1,3}$  and a blue $K_2$.
As before, let $v$ be a vertex with $\deg_{red}(v)\geq 3$.

\medskip\noindent
{\em (i) $\deg_{red}(v) \geq 5$}

If there is a blue  edge in the subgraph of $K_n$ induced by $V (K_n) \backslash \{ v \}$,  then clearly we have a red $K_{1,3}$  union a blue $K_2$, because  we always have at least three vertices in Nred(v) disjoint from the vertices of this blue edge.

Hence the only blue edges in $E(Kn)$ are those incident with $v$ and it follows $|B| \leq   n-6$,  a contradiction.

\medskip\noindent
{\em (ii) $deg_{red}(v)  = 4$} 

Clearly, we can assume that all edges in $X$ are red otherwise we are done by taking a blue edge in $X$ with a red $K_{1,3}$ in  $N_{red}[v]$.  Moreover $|X|  = n – 5  \geq 5$, implying $X$ is a  complete red graph on at least five vertices.

If there is a blue edge from $N_{red}[v]$ to $X$ we are done as we can take this blue edge and a vertex-disjoint red $K_{1,3}$ in $X$.

If there is a blue edge in $N_{red}(v)$ we are done as we can take this blue edge and a red $K_{1,3}$ in $X$.   So no blue edges are possible, a contradiction to  $|B| \geq  n+1$.

\medskip\noindent
{\em (iii) $\deg_{red}(v)  = 3$} 
 Clearly we may assume that all edges in $X$ are red otherwise we are done, and also  $|X| = n-4 \geq 6$.  

If there is a blue edge in $N_{red}(v)$ we are done by taking it with the red $K_{1,3}$  from $X$. If there is a blue edge $e$ from $N_{red}[v]$ to $X$ we can still take a red $K_{1,3}$ in $X$  which is vertex-disjoint from the blue edge $e$ and we are done.  Hence no blue edges are possible, a contradiction to  $|B| \geq n +1$.

\medskip\noindent
{\em Case 3: the existence of a $(2,2)$-coloured copy}

We will show the existence of $K_{1,3}$ with two red edges and one blue edge and a vertex-disjoint blue $K_2$.
Suppose first that there is no  vertex $v$ incident with two red edges and a blue edge.  Then in each vertex either all edges are blue or all edges except one are blue or all edges are red.

Suppose there is a vertex $v$ with $\deg_{blue}(v) = n-1$.  Since there is a vertex $u$  with $\deg_{red}(v) \geq 3$ it follows that in $u$ there is  the required coloured $K_{1,3}$.

Suppose there is a vertex $v$ with $\deg_{red}(v) = n-1$.  Then there must be a red edge in the subgraph of $K_n$ induced by $V(K_n)\backslash v$,    say $e = yz$.  Then $\deg_{red}(y) = n-1$,  otherwise we are done. But that forces all vertices in $V(K_n)$ to have red-degree at least 2. Hence all edges are red, a contradiction.

The only case that remains is that all red edges are vertex disjoint but then $|R|  \leq n/2$ a contradiction.

So assume that there is a coloured copy of $K_{1,3}$ with  two red edges $va$ and $vb$ and a blue edge $vc$. Clearly all edges in $Y$, the subgraph of $K_n$ induced by $V(K_n) \backslash \{ v,a,b,c \}$  must be red otherwise we are done.  Also  $Y$ has $n - 4 \geq 6$ vertices.

If there is a blue edge $e_1$  from $a$  or from $b$ to  $z\in V(Y)$,  we are done by taking $K_{1,2}$ centered on $z$  with $e_1$ and $vc$.
So all blue edges are incident with $c$  with at most one more possible  blue edge $ab$,  making the total number of blue edges at most $n$ a contradiction.
\end{proof}

\begin{theoremnum} \label{bal4k2}
For $n \geq 10$, $\bal(n, 4K_2) =n-1$.

\end{theoremnum}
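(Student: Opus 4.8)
The plan is to prove matching lower and upper bounds, both built on the single observation that a \emph{balanced} copy of $4K_2$ is exactly a set of four pairwise vertex-disjoint edges, two red and two blue; equivalently, a red $2K_2$ together with a vertex-disjoint blue $2K_2$, occupying eight distinct vertices.

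\emph{Lower bound.} First I would exhibit a colouring with $\min\{|R|,|B|\}=n-1$ and no balanced $4K_2$, giving $\bal(n,4K_2)\ge n-1$. Let $R$ be a star $K_{1,n-1}$ and colour everything else blue. Then $|R|=n-1$ and $|B|=\binom{n}{2}-(n-1)=\binom{n-1}{2}\ge n-1$ for $n\ge 4$, so $\min\{|R|,|B|\}=n-1$. Since every red edge meets the centre of the star, $R$ has no two independent edges, hence no copy of $4K_2$ can receive two red edges; in particular none is balanced.

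\emph{Upper bound.} Assume $\min\{|R|,|B|\}\ge n$ and write $\nu$ for the matching number. By the Observation, a graph on $n\ge 4$ vertices with more than $n-1$ edges has two independent edges, so $\nu(R),\nu(B)\ge 2$. The argument then splits on the sizes of $\nu(R)$ and $\nu(B)$. If $\nu(R)\le (n-4)/2$, a maximum red matching leaves at least four uncovered vertices; by maximality they span no red edge, so they induce a blue clique $K_{\ge 4}$, which contains a blue $2K_2$ vertex-disjoint from any two edges of the red matching, yielding a balanced $4K_2$ (symmetrically if $\nu(B)\le(n-4)/2$). If instead $\max\{\nu(R),\nu(B)\}\ge 6$, say for blue, fix any red $2K_2$ on four vertices; at most four edges of a maximum blue matching meet these, leaving at least two, hence a disjoint blue $2K_2$. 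These two cases dispose of every $n\ge 14$ and reduce the problem to $10\le n\le 13$ with $\nu(R),\nu(B)\in\{4,5\}$.

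\emph{The near-perfect regime.} In the remaining cases both colour classes have a matching covering all but at most three vertices, and here I would study $M_R\cup M_B$, the union of a maximum red and a maximum blue matching. Since no vertex meets two edges of the same colour, this union has maximum degree $2$ and its edges properly alternate in colour, so it is a disjoint union of alternating paths and even cycles. A selection of two red and two blue edges that are pairwise vertex-disjoint fails only when all the matching edges sit in a single alternating $8$-cycle; every other configuration (a $C_{10}$, two or more components, or any path component) admits such a selection, in the multi-component case simply by drawing the two red edges from one component and the two blue from another. But $M_R\cup M_B$ cannot be a lone $C_8$: if it were, the (at least two) vertices outside it would be uncovered by both maximum matchings, forcing the edge joining two of them to be blue (by maximality of $M_R$) and simultaneously red (by maximality of $M_B$), a contradiction. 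Hence for $n\ge 10$ the union always covers more than eight vertices, so it contains a path-endpoint or a second component, and the required $4K_2$ is found.

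The main obstacle is exactly this last regime. The clean matching-count argument collapses precisely when both matchings are near-perfect and $n$ is small, since the alternating $C_8$ is a genuine obstruction to the abstract two-plus-two selection; breaking it requires invoking maximality of the matchings (equivalently, the full hypothesis $|R|,|B|\ge n$ rather than merely $\nu\ge 2$) to show that the surplus of one to three vertices forces a more flexible path or multi-component structure. I expect the careful verification of the configurations for $n\in\{10,11,12,13\}$ to be the only laborious part of the proof; everything else is immediate.
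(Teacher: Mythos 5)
Your overall strategy is genuinely different from the paper's: the paper fixes a red $2K_2$ on four vertices, deletes them, and runs a counting case analysis on where the at least five blue edges incident to the deleted vertices must lie, whereas you trichotomize on the matching numbers $\nu(R),\nu(B)$ and, in the critical regime, analyse $M_R\cup M_B$ as a union of alternating paths and even cycles. Your lower bound coincides with the paper's and is correct; Cases A and B are correct; and the maximality argument excluding a lone alternating $C_8$ is correct.

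However, the structural claim on which your near-perfect regime rests is false, and this is a genuine gap. You assert that a within-union selection of two red and two blue pairwise disjoint edges fails \emph{only} when $M_R\cup M_B$ is a single alternating $C_8$. A second failing configuration exists: an alternating $C_6$ plus a \emph{cherry}, i.e.\ a red and a blue matching edge sharing a vertex. Concretely, at $n=10$ colour red the nine edges incident to vertex $7$ together with $12,34,56$, and colour everything else blue. Then $|R|=12$, $|B|=33$, $\nu(R)=\nu(B)=4$, and one may take $M_R=\{12,34,56,78\}$ and $M_B=\{23,45,61,89\}$, so that $M_R\cup M_B$ is the alternating $C_6$ on $\{1,\dots,6\}$ plus the cherry $78,89$, with vertex $10$ untouched. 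Inside this union no valid selection exists: any two red edges leave at most one blue edge of the union untouched (two $C_6$-reds kill all three $C_6$-blues, leaving only $89$; if $78$ is chosen then $89$ is killed and any $C_6$-red kills two of the three $C_6$-blues). Moreover your maximality contradiction does not apply: only one vertex lies outside the union, so maximality merely forces $\{9,10\}$ to be blue and $\{7,10\}$ to be red, which is consistent. The theorem still holds for this colouring --- e.g.\ red $12,78$ with blue $45,\{9,10\}$ is a balanced $4K_2$ --- but the copy necessarily uses an edge \emph{outside} $M_R\cup M_B$, a constructive step entirely absent from your sketch, where edges outside the union are used only to derive contradictions. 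Separately, your one-line justification for the multi-component case (two reds from one component, two blues from another) also fails to cover legitimate configurations such as $C_8$ plus a lone red edge, where no second component contains two blue edges (there a mixed within-union selection does exist, but your recipe does not produce it). So the regime analysis needs a corrected classification of the failing unions (lone $C_8$ \emph{and} $C_6$ plus cherry) together with a constructive use of the edges forced by maximality in the latter case.
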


\begin{proof}
\noindent \emph{Lower bound:}

Pick a vertex $v$ in $K_n$ and colour all the edges incident to it red and the rest of the edges  blue.  Clearly (as there are no two independent edges of red colour) there is no balanced $4K_2$ hence $\bal(n,4K_2) \geq n-1$ always holds.



	\medskip\noindent \emph{Upper bound:} We have to show that  for $n \geq 10$  and any 2-edge colouring  of $K_n$ with $\min \{ R , B \}  \geq n$,  there exists a balanced $4K_2$.

	We may assume without loss of generality that we have $K_n$ with its edges coloured red and blue such that there are at least $n$ edges of each colour. Hence we may assume without loss of generality that there are two independent red edges $ab$ and $cd$. Let us remove the vertices $a, b, c, d$ and all edges incident to them to leave a graph $X$ isomorphic to $K_{n-4}$ with its edges coloured red and blue. If there are two independent blue edges in this remaining graph then we are done. So we may assume that no such two edges exist in $X$, which implies that there are at most $n-5$ blue edges in $X$, and hence at least five blue edges among the deleted edges. Therefore there exists at least one blue edge joining a vertex $u$ in $X$ to one of the vertices $a, b, c$ or $d$, say $a$, without loss of generality.We then have two cases.
	
	\medskip\noindent
	\emph{Case 1: There are no blue edges in $X$.}
	
	This implies that besides $au$ there are at least four other blue edges among the deleted edges. This gives us two subcases. Either (i) there is another blue edge joining vertex $u$ in $X$ to one of $\{b,c,d\}$; or (ii) there is a blue edge in the complete graph induced by $\{a,b,c,d\}$ and another independent edge joining $u$ to one of these vertices. We consider these two subcases separately.
	
	\emph{Subcase (i):} We may assume without loss of generality that $v$ in $X$ is joined to $b$ with a blue edge. If we remove $u, v$ and all the edges incident to them in $X$ we are left with $K_{n-6}$ in which all the edges are red. This graph has two independent red edges since ${n-6 \choose 2} \geq n-6$ because $(n-7)/2>1$ since we are assuming that $n\geq 10$. These two red edges, together with the two blue edges $au$ and $bv$ form the required balanced $4K_2$.
	
	\emph{Subcase (ii):} If we remove $u$ and its edges from $X$ we have $K_{n-5}$ whose edges are all red, and hence there are two independent red edges in this graph for $n\geq 8$, and again we can form the balanced $4K_2$ using these two edges and the two independent blue edges.
	
	\medskip\noindent
	\emph{Case 2: There is at least one blue edges, say $ux$, in $X$.}
	
	Therefore there must be a blue edge $e$ among the deleted blue edges which is not incident to $u$, since there are four such deleted edges incident to $u$, and at least another four blue edges. If $e$ is incident to $x$ then there must be another edge among the deleted ones not incident to $x$ because again there are at most three remaining deleted edges incident to $x$. So there are two independent blue edges, one of which may be $ux$. Again, if we remove $u, x$ and all edges incident to these vertices from $X$, then we have two independent red edges in this remaining graph if
	${n-6 \choose 2} - (n-5) \geq n-6$, that is, $n^2-15n+52 \geq 0$, which is true when $n\geq 10$. These two red edges, together with the two independent blue edges, form the required balanced $4K_2$.

\end{proof}

\begin{theoremnum} \label{bal2k2k12}
For $n \geq 8$, $\bal(n, 2K_2  \cup  K_{1,2} ) = 1$.
\end{theoremnum}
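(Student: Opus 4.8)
The lower bound is immediate. Colour a single edge blue and all the remaining $\binom{n}{2}-1$ edges red; then $\min\{|R|,|B|\}=1>0$, yet a balanced copy of $2K_2\cup K_{1,2}$ would need two blue edges (as $e(G)=4$ is even, ``balanced'' means $(2,2)$), which is impossible with only one blue edge present. Hence $\bal(n,2K_2\cup K_{1,2})\ge 1$. For the upper bound I must show that every $2$-edge-colouring of $K_n$ with $n\ge 8$ and at least two edges of each colour contains a $(2,2)$-coloured copy. Note that $2K_2\cup K_{1,2}$ is the disjoint union of a cherry $K_{1,2}$ (two edges at a common vertex) and a $2K_2$ (two independent edges), spanning $7$ of the $n\ge 8$ vertices, and that a balanced copy arises in exactly one of three ways: a red cherry with a vertex-disjoint blue $2K_2$ (Configuration II), the colour-reversed version (Configuration III), or a bichromatic cherry (a red and a blue edge sharing a vertex) together with a vertex-disjoint red edge and blue edge (Configuration I). The plan is to produce one of these three.

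I would first establish three structural facts. (a) A bichromatic cherry always exists: if no vertex met both colours, the vertices would split into a purely-red class and a purely-blue class, and any edge joining the two classes would, whatever its colour, contradict this split. (b) At least one colour contains a $2K_2$: if, say, the red graph had no two independent edges, then by the Observation it is a star or a triangle, so deleting its at most three supporting vertices leaves an all-blue clique on $\ge n-3\ge 5$ vertices, which contains a blue $2K_2$. (c) \emph{Completion lemma:} if a bichromatic cherry $C$ can be chosen so that $K_n-V(C)$ (on $\ge 5$ vertices) still contains both a red and a blue edge, then it contains a \emph{vertex-disjoint} red--blue pair; indeed, if the only red and blue edges present shared a vertex $z$, then, using the $\ge 2$ further vertices available, an extra edge among them would in either colour yield the required disjoint pair. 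Together with $C$ this realises Configuration I.

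With these in hand I would split into two cases. In the case where some colour, say blue, contains no $2K_2$, fact (b) shows blue is localised to at most three vertices while its complement is an all-red clique on $\ge 5$ vertices; the (necessarily present) blue cherry, taken with a red $2K_2$ chosen inside that clique and disjoint from the cherry, gives Configuration III, and this case is clean.

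The harder case, and the main obstacle, is when \emph{both} colours contain a $2K_2$. Here I would begin from a vertex-disjoint red edge $r$ and blue edge $b$: these exist, for otherwise every red edge would meet every blue edge, forcing every edge of $K_n$ into a fixed $4$-vertex set, which is impossible for $n\ge 8$. Using $\{r,b\}$ as the bichromatic $2K_2$, I would look for a bichromatic cherry in the remaining $\ge n-4\ge 4$ vertices via fact (c). The difficulty is that this remainder may be monochromatic, all edges of one colour being clustered on $V(r)\cup V(b)$; the bulk of the work is to reroute in this situation by extracting a monochromatic cherry of the clustered colour together with a disjoint opposite-colour $2K_2$ taken from the large monochromatic clique that is left behind. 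The tight bookkeeping needed to keep all seven vertices distinct is precisely what forces the threshold $n\ge 8$, and checking it at the boundary values $n=8,9$ is the most delicate part of the argument.
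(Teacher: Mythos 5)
Your lower bound is fine, and your preparatory material is sound: facts (a), (b), (c) are all correct, and your Case A (one colour contains no two independent edges) is a complete argument via Configuration III. The gap is in Case B, and you yourself flag it: the subcase in which the clique $X$ on the $n-4$ vertices outside $V(r)\cup V(b)$ is monochromatic is never actually proved --- you state an intention (``the bulk of the work is to reroute\ldots'') and concede that the boundary values $n=8,9$ remain to be checked. Since this subcase is exactly the crux of the theorem, the proposal is not a proof. Worse, the rerouting you describe fails as stated. Take $X$ all red, red edge $r=r_1r_2$, blue edge $b=B_1B_2$, and suppose the only blue edges are $B_1B_2$ and $r_1x_1$ with $x_1\in X$; this colouring lies in your Case B (both colours contain a $2K_2$) and in the clustered subcase. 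The clustered colour is blue, but the blue graph is a matching, so the ``monochromatic cherry of the clustered colour'' you want to extract does not exist and Configuration III is unavailable; a balanced copy must instead be assembled as Configuration II (a red cherry inside $X$ together with the blue $2K_2$ consisting of $B_1B_2$ and $r_1x_1$) or as Configuration I with a cherry straddling the two sides. Moreover, even when the clustered colour does contain a cherry, that cherry can occupy two vertices of $X$, and an opposite-coloured $2K_2$ ``taken from the clique left behind'' then needs $|X|\ge 6$, i.e.\ $n\ge 10$; to close the case at $n\ge 8$ one must reuse the edge $r$. None of this is routine bookkeeping.

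For comparison, the paper's proof takes a fully explicit route: it fixes at the outset a bichromatic $K_{1,2}$ together with a vertex-disjoint red edge (five vertices $a,b,c,d,e$ with $ab$, $cd$ red and $de$ blue, centre $d$), sets $X=V\setminus\{a,b,c,d,e\}$, and then exhaustively examines every possible location of a blue edge: inside $X$, from each of $a,b,c,d,e$ to $X$, and within $\{a,b,c,d,e\}$. Each location either completes a balanced copy immediately or is forced to be red, and at the end at most one blue edge can survive, contradicting $\min\{|R|,|B|\}\ge 2$. That exhaustive sweep is precisely the work your proposal leaves undone: to repair your argument you would need an analysis of comparable length inside your monochromatic subcase, specifying in each situation which of your three configurations is used and verifying the seven-vertex count at $n=8$.
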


\begin{proof}
\noindent {\em Lower bound:}  Colour one edge of $K_n$ red and the rest blue. Clearly no copy of $2K_2 \cup  K_{1,2}$  can have two red edges.

\medskip

\noindent {\em Upper bound:}  Clearly there must be  $K_2 \cup K_{1,2}$  with the two edges of $K_{1,2}$ of distinct colour, say  without loss of generality, there are 5 vertices  $a,b,c,d,e $ such that $ab$, is red  $cd$ is red and $de$ is blue (where the vertex $d$ is the centre of $K_{1,2}$).  Let $X = V \backslash \{ a,b,c,d,e\}$,  where,   since $n \geq 8$,   $|X| \geq 3$. Let  $u_1,\ldots,u_{n-5}$ be the vertices of $X$.

If there is a blue edge in $X$ then we are done. So let us  assume all edges of $X$ are red.  

If there is a blue edge from either $a$ or $b$ to $X$, say  to $u_1$, then we are done by taking the edge $au_1$  or $ bu_1$, the edge $u_2u_3$ and $K_{1,2}$ on the vertices $\{c;d,e\}$ (where the notation $\{p:q,r,s,\ldots\}$ denotes a star with centre $p$). So let us  assume all edges from $a$ and $b$ to $x$ are red. 

If there is a blue edge from $d$ to $X$, say to $u_1$ we are done by the edges $ab$, $u_2u_3$ and $K_{1,2}$ on the vertices $\{e;d,u_1\}$.  So let us assume all edges from $d$ to $X$  are red.  

If there is a blue edge from $c$ to $X$, say to $u_1$ we are done by the edges $ab$, $ed$ and $K_{1,2}$ on the vertices $\{c;u_1,u_2\}$. So let us assume all edges from $c$ to $X$ are red.

 If there is a blue edges from $e$ to $X$  say to $u_1$ we are done by the edges $ab$, $u_2u_3$ and $K_{1,2}$ on the vertices $\{d;e,u_1\}$.  So let us assume all edges from $e$ to $X$  are red.

If edge $ce$ is  blue we are done by the edges  $ab$, $u_1u_2$ and $K_{1,2}$ on the vertices $\{  d;e,c\}$.  So assume  edge $ce$ is red.

If either edge $ae$ or $be$ is blue we are done by $ bu_1$, $cu_2$ and $K_{1,2}$ on the vertices $\{a;e,d\}$  in  the case $ae$ is  blue, or $au_1$, $cu_2$ and $K_{1,2}$ on the vertices $\{ b;e,d\}$   in the case $be$ is blue.  So let us assume these two edges are red.

 If there is a blue edge from $c$ to either $a$ or $b$, we are done by the edges $ac$, $de$ and $K_{1,2}$ on the vertices  $\{ u_1;u_2,u_3\}$ or  by $bc$, $de$ and $K_{1,2}$ on the vertices  $\{ u_1;u_2,u_3\}$.   So let us assume both these edges are red.

 Then this implies that either $da$ or $db$ is blue, as there must be at least two blue edges. If $da$ is blue we are done by $bu_1$, $cu_2$ and $K_{1,2}$ on the vertices $\{ e;d,a\} $, while if $db$ is blue we are done by $au_1$, $cu_2$ and $K_{1,2}$ on the vertices $\{ e;d,b\}$.

\end{proof}

\begin{theoremnum} \label{balk2k3}
For $n \geq 7$, $\bal(n, K_2 \cup  K_3) = 3$.
\end{theoremnum}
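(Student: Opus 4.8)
The plan is to establish matching bounds. Since $K_2\cup K_3$ has four edges, a balanced copy is a $(2,2)$-coloured one, and as the $K_2$ carries a single colour while the $K_3$ carries the other three edges, such a copy is exactly a \emph{mixed} triangle (one that is neither monochromatic) together with a vertex-disjoint edge coloured with the \emph{minority} colour of that triangle. Concretely: either a $(2,1)$-coloured (two red, one blue) triangle plus a disjoint blue edge, or a $(1,2)$-coloured triangle plus a disjoint red edge. I would use this reformulation throughout, since it reduces everything to locating a suitable mixed triangle and a disjoint edge of the right colour.

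For the lower bound I would colour the edges of a fixed triangle $abc$ red and every other edge blue, so that $\min\{|R|,|B|\}=3$. The three red edges pairwise meet, so any triangle containing two red edges contains all three and is monochromatic; hence there is no $(2,1)$-coloured triangle. A $(1,2)$-coloured triangle uses exactly one of $ab,bc,ca$, but each of the remaining two red edges meets an endpoint of it, so no red edge is disjoint from it. Thus no balanced copy exists, giving $\bal(n,K_2\cup K_3)\ge 3$.

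For the upper bound I assume $n\ge 7$ and a colouring with $\min\{|R|,|B|\}\ge 4$, and suppose for contradiction that there is no balanced copy. A mixed triangle must exist: since both colours occur, some red and some blue edge share a vertex (otherwise the vertices meeting red edges and those meeting blue edges would be disjoint, and an edge between the two sets would be bichromatic), and these two edges span a mixed triangle. By the red/blue symmetry of the hypothesis I may take this triangle $T=xyz$ to be $(2,1)$-coloured with $xy,xz$ red and $yz$ blue. If any blue edge avoided $S=\{x,y,z\}$ it would pair with $T$ to give a balanced copy; hence every blue edge meets $S$, and so $W=V(K_n)\setminus S$, of size $n-3\ge 4$, induces a red clique. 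Writing $B_x,B_y,B_z\subseteq W$ for the blue neighbourhoods of $x,y,z$ in $W$ and noting that $yz$ is the only blue edge inside $S$, the bound $|B|\ge 4$ forces $|B_x|+|B_y|+|B_z|\ge 3$.

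The rest is a short case analysis that exploits the red clique $W$ to produce disjoint red edges cheaply. If some $|B_v|\ge 2$, then $v$ with two of its blue $W$-neighbours $w_1,w_2$ forms a $(1,2)$-coloured triangle (as $w_1w_2$ lies in the red clique), and any red edge inside $W\setminus\{w_1,w_2\}$, which has at least $n-5\ge 2$ vertices, completes a balanced copy. So I may assume $|B_x|=|B_y|=|B_z|=1$, with blue edges exactly $yz,xw_x,yw_y,zw_z$; if $w_y\ne w_z$ then $yzw_y$ is $(1,2)$-coloured and a disjoint red edge of $W$ again finishes. The genuinely delicate case, which I expect to be the main obstacle, is $w_y=w_z=:p$: now the blue edges form a triangle $yzp$ together with the pendant edge $xw_x$, and the "disjoint red edge" strategy no longer applies, so one must switch to the other colour. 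Here I would instead pick $q\in V(K_n)\setminus\{x,y,z,p,w_x\}$ (nonempty since $n\ge 7$), observe that $yq,zq$ are red while $yz$ is blue so that $yzq$ is a $(2,1)$-coloured triangle, and pair it with the disjoint blue edge $xw_x$ to obtain a balanced copy. As every case yields a balanced copy, the assumption is untenable and $\bal(n,K_2\cup K_3)\le 3$.
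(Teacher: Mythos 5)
Your proof is correct and follows essentially the same route as the paper's: the identical lower-bound colouring (a red triangle, all else blue), and an upper bound that fixes a $(2,1)$-coloured triangle, forces the exterior to be a red clique, and then runs a case analysis on the blue edges leaving the triangle, using the same two pairing tricks (a $(1,2)$-triangle with a disjoint red edge, or a $(2,1)$-triangle with a disjoint blue edge). The only difference is organizational: the paper treats the apex vertex asymmetrically and finishes with a counting contradiction (at most $3<4$ blue edges), whereas you treat the three triangle vertices symmetrically and then construct a balanced copy explicitly in each residual configuration, including the blue-triangle case that the paper's ordering of cases never needs to confront.
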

 
\begin{proof}

 \noindent {\em Lower bound:} Consider a coloring of $K_n$ in which there is a red coloured $K_3$ and the rest of the edges are blue. Clearly  there is no  $K_2 \cup K_3$  with exactly two red edges hence   $\bal(n , K_2 \cup K_3)  \geq 3$.

\medskip

 \noindent {\em Upper bound:} :   Suppose  now $n \geq 7$  and we have an $(r,b)$-colouring of $K_n$ with  $\min\{R ,B \}  \geq 4$.  We will show  the existence of a balanced $K_2 \cup  K_3$.  Without loss of generality, there must be a $K_3$ on vertices $\{a,b,c\}$ with two red edges $ab$ and $bc$ and one blue edge $ac$.

Let $X = V \backslash \{ a,b,c\}$. Then $|X| \geq 4$ and let $u_1,\ldots,u_{n-3}$ be the vertices of $X$.

Now if  there is a blue edge in $X$ we are done. So we may assume that all edges in $X$ are red.

Suppose there are at least  two blue edges from $b$ to $X$  say  to $u_1$ and $u_2$. Then we are done by $K_3$ induced on $\{b,u_1,u_2\}$ and the edge $u_3u_4$.

If there is one blue edge from $b$ to $X$ say to $u_1$ then we are done by $K_3$ induced on $\{b,u_1,u_2\}$ and the edge $ac$.

So we may assume $b$ is connected to $X$ by red edges only.  Suppose there are at least two blue edges from $a$ to $X$, say  to $u_1$ and $u_2$.  Then we are done by $K_3$ induced on $\{a,u_1,u_2\}$ and the edge $bc$.

Finally suppose there are at least two blue edges from $c$ to $X$, say  to $u_1$ and $u_2$.  We are done by  $K_3$ induced on $\{c,u_1,u_2\}$ and the edge $ab$.

 So there is at most one blue edge from $a$ to $X$ and one  from $c$ to $X$. But then the total number of blue edges is at most 3,  a contradiction.

The colouring with $\{ a,b,c\}$  a blue triangle and all other edges red shows that is result is best possible.
\end{proof}

\begin{theoremnum} \label{balk13k2}
For $n \geq 9$,  $\bal( n , K_{1,3} \cup  K_2)  =  n-1$.  
\end{theoremnum}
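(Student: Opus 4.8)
\noindent\emph{Lower bound.} The plan is to reuse the extremal colouring from the lower bound of Theorem~\ref{omnithm}: fix a vertex $v$, colour every edge incident with $v$ red and every other edge blue. Then $|R|=n-1$ and $|B|=\binom{n}{2}-(n-1)\ge n-1$, so $\min\{|R|,|B|\}=n-1$. I would then verify directly that this colouring admits no $(2,2)$-coloured (i.e.\ balanced) copy of $K_{1,3}\cup K_2$. Since every red edge passes through $v$, a copy can contain two red edges only if two of its edges meet at $v$; inspecting the four possibilities---$v$ is the centre of the claw, a leaf of the claw, an endpoint of the $K_2$, or not in the copy---gives colour distributions $(3,1)$, $(1,3)$, $(1,3)$ and $(0,4)$ respectively, never $(2,2)$. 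Hence $\bal(n,K_{1,3}\cup K_2)\ge n-1$.

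\noindent\emph{Upper bound.} It remains to show that $\min\{|R|,|B|\}\ge n$ forces a $(2,2)$-coloured copy; by red/blue symmetry I may take the target to be a claw with two red legs and one blue leg together with a vertex-disjoint \emph{blue} edge. Note that Theorem~\ref{omnithm} already yields a $(2,2)$-copy once $\min\{|R|,|B|\}\ge n+1$ and $n\ge10$, so the genuine work is to sharpen the threshold by one and to treat $n=9$. The first step is to produce a \emph{mixed claw}: if no vertex were incident with two red edges and a blue edge, then every vertex would satisfy $\deg_{red}\le1$ or $\deg_{red}=n-1$; a vertex of the second type, together with $|B|\ge n$, propagates (any further red edge forces its endpoints, and then all vertices, to be red-complete) to make every edge red, contradicting $|B|\ge n$, while if no such vertex exists then $2|R|=\sum_v\deg_{red}(v)\le n$ forces $|R|\le n/2<n$. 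So there is a vertex $v$ with red edges $va,vb$ and a blue edge $vc$.

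\noindent The crux is the analysis of $Y:=V\setminus\{v,a,b,c\}$, where $|Y|=n-4\ge5$. If $Y$ contains a blue edge, I pair it with the mixed claw at $v$ and am done. Otherwise $Y$ induces a red clique, so every blue edge meets $\{v,a,b,c\}$; since $|B|\ge n>6=\binom{4}{2}$, at least one blue \emph{cross-edge} joins $\{v,a,b,c\}$ to $Y$. The decisive move is to \emph{re-centre the claw inside the red clique}: every $w\in Y$ has $\deg_{red}(w)\ge n-5$ and so supplies two red legs for free, while a blue cross-edge at $w$ supplies the blue leg. I then need only two vertex-disjoint blue edges one of which is a cross-edge: by the fact that a family of pairwise intersecting edges is a star or a triangle (which would otherwise force $|B|\le n-1$), two independent blue edges exist, and if both happen to lie inside $\{v,a,b,c\}$ I combine the guaranteed cross-edge with whichever of them avoids its $4$-set endpoint. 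Centring the claw at the $Y$-endpoint of the cross-edge, choosing its two red legs inside $Y$ away from the at most one further forbidden vertex (possible as $|Y|\ge5$), and using the other blue edge as the disjoint $K_2$ produces the required $(2,2)$-copy.

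\noindent I expect the main obstacle to be precisely the ``$Y$ all red'' branch. The temptation is to keep the claw at $v$, whose red-degree we cannot control; the resolution is to abandon $v$ and exploit the large red clique $Y$ to obtain the two red legs automatically, thereby reducing the whole question to placing two suitably disjoint blue edges. The counting $|B|>\binom{4}{2}$ together with the pairwise-intersection dichotomy is exactly what makes $\min\{|R|,|B|\}\ge n$ (rather than $n+1$) suffice, and what lets the argument already run at $n=9$.
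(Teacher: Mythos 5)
Your proof is correct, but it takes a genuinely different route from the paper's. The paper's upper bound starts from a vertex $v$ incident with both colours, assumes without loss of generality $\deg_{red}(v)\ge \deg_{blue}(v)\ge 1$ (so that $\deg_{red}(v)\ge (n-1)/2\ge 4$), and then splits on $\deg_{blue}(v)=1$ versus $\deg_{blue}(v)\ge 2$, working entirely around $N_{red}(v)$: in the first case either a blue edge inside $N_{red}(v)$ finishes the job or all blue edges are forced through one vertex $w$, giving $|B|\le n-1$; in the second case either a red edge inside $N_{red}(v)$ finishes the job or all edges inside $N_{red}(v)$ are blue, which again yields a balanced copy directly. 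Your argument instead transplants the scheme of Case 3 of Theorem \ref{omnithm} (the $(2,2)$ case of the omnitonal result): manufacture a mixed claw $\{v;a,b,c\}$, pass to $Y=V\setminus\{v,a,b,c\}$, and when $Y$ is a red clique, re-centre the claw at the $Y$-endpoint of a blue cross-edge and supply the disjoint blue $K_2$ via the star-or-triangle dichotomy for pairwise intersecting edge families. What your route buys is precisely the sharpening the paper achieves by other means: the argument of Theorem \ref{omnithm} as written needs $\min\{|R|,|B|\}\ge n+1$ and $n\ge 10$, while your counting ($|B|>\binom{4}{2}$ to force a cross-edge, and $|B|\le n-1$ for an intersecting family) runs the same template at threshold $n$ and already at $n=9$. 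What the paper's route buys is brevity: the degree WLOG makes the large red degree free and reduces everything to two short cases. Your lower-bound verification is also more explicit than the paper's, which merely asserts that the star colouring admits no balanced copy.

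Two points of terseness, neither a genuine gap. First, in the mixed-claw step, when some vertex $v$ is red-complete you should also dispose of the case that no red edge avoids $v$ (then $|R|=n-1<n$, a contradiction) before the propagation can begin; as written, the parenthetical only treats the case that such a further red edge exists. Second, in producing the two vertex-disjoint blue edges you only handle explicitly the case that both independent blue edges lie inside $\{v,a,b,c\}$; the complementary case, where at least one of them is itself a cross-edge, is immediate (take the pair as is), but it should be stated.
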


\begin{proof}

\noindent {\em Lower bound:} Clearly $\bal( n , K_{1,3} \cup  K_2) \geq n-1$:  consider $K_n$ and choose a vertex $v$.  Colour all edges incident with $v$ red and all other edges blue.   No balanced copy of  $K_{1,3} \cup K_2$ exists.

\medskip

\noindent {\em Upper bound:}  Suppose now that $\min \{  |R|  ,  |B| \}  \geq n$.  We have to show the existence of a balanced copy of  $K_{1,3} \cup K_2$ (two red edges and two blue edges).  Since $n \geq 9$ and $\min\{  |R|  ,  |B| \}  \geq n$, it follows that there are at least two independent  red, respectively blue, edges.

Clearly there is a vertex $v$ incident to  both red and blue edges. We assume, without loss of generality, that  $deg_{red}(v)  \geq deg_{blue}(v)  \geq 1$.  Clearly $deg_{red}(v) \geq \frac{ n-1}{2} \geq 4$.   We consider the following cases:  

\noindent \emph{Case 1}:   $deg_{blue}(v) = 1$ with the blue edge $e=vw$.  If there is a blue edge $e^*=xy$ where both $x$ and $y$ are in $N_{red}(v)$ then since $deg_{red}(v)  \geq 4$, we can take $e=vw$ and two  red edges incident with $v$ but vertex-disjoint from $x$ and $y$,  and add $e^*$  to get a balanced $K_{1,3} \cup K_2$.  Otherwise,  if there is no such  edge as $e^*$  then all blue edges are incident with $w$,  which would imply that  $|B| \leq n-1$, a contradiction.

 \smallskip

\noindent \emph{Case 2}: $ deg_{blue}(v)  \geq 2$ --- this forces all edges induced in $N_{red}(v)$ to be blue,  for otherwise there is a red edge say $e =xy$ in $ N_{red}(v) $ and a red edge $e^*$ incident with $v$ but not with $x$ nor $y$,  and we take two blue edges incident to  $v$  together with $e$ and $e^*$  to get a balanced $K_{1,3} \cup K_2$.  So,  all edges induced  in $N_{red}(v)$ are blue.  We take $e=xy$ to be such a blue edge and since  $deg_{red}(v) \geq 4$ we have two red edges $va$ and $vb$ which are  vertex disjoint from  $x$ and $y$.  We now take one blue edge incident with $v$ and the edges $va$  and $vb$  together with the edge $e=xy$  to get a balanced $K_{1,3} \cup K_2$. 

\end{proof}

\begin{theoremnum} \label{sbalk13}
For $n \geq 5$,  $\sbal(n,K_{1,3}) = n-1$.
\end{theoremnum}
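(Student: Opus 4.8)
The statement concerns the odd-edge graph $K_{1,3}$, so the relevant task is to force, simultaneously, a $(2,1)$-coloured and a $(1,2)$-coloured copy of $K_{1,3}$. The key reformulation I would use at the outset is that a $(2,1)$-coloured copy of $K_{1,3}$ is nothing more than a vertex $v$ with $\deg_{red}(v)\geq 2$ and $\deg_{blue}(v)\geq 1$ (take two red leaves and one blue leaf, which are automatically distinct), and dually a $(1,2)$-coloured copy is a vertex with $\deg_{red}(v)\geq 1$ and $\deg_{blue}(v)\geq 2$. Since the hypothesis $\min\{|R|,|B|\}\geq n$ is symmetric in the two colours, it suffices to produce the $(2,1)$-coloured copy; the $(1,2)$-coloured copy then follows by exchanging the roles of red and blue.

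For the lower bound I would colour all $n-1$ edges at a single vertex $v$ red and every remaining edge blue. Then $|R|=n-1$, while $|B|=\binom{n}{2}-(n-1)\geq n-1$ for $n\geq 4$, so $\min\{|R|,|B|\}=n-1$. Here $v$ is the only vertex of red degree at least $2$, but it has blue degree $0$, while every other vertex has red degree exactly $1$; hence no vertex can be the centre of a $(2,1)$-coloured claw, and so $\sbal(n,K_{1,3})\geq n-1$.

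For the upper bound, assume $\min\{|R|,|B|\}\geq n$ and suppose, for contradiction, that no vertex has $\deg_{red}\geq 2$ together with $\deg_{blue}\geq 1$. Then every vertex either has $\deg_{blue}=0$ (an \emph{all-red} vertex, of which let there be $a$) or has $\deg_{red}\leq 1$. If every vertex were all-red then $|B|=0$, impossible; so some vertex $t$ has $\deg_{red}(t)\leq 1$. Each all-red vertex sends a red edge to $t$, so $a\leq \deg_{red}(t)\leq 1$. The remaining work is a short edge count. If $a=0$, then $2|R|=\sum_v \deg_{red}(v)\leq n$ forces $|R|\leq n/2<n$. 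If $a=1$, with $v_0$ the unique all-red vertex, then every other vertex has its single red edge going to $v_0$, so all red edges are incident with $v_0$ and $|R|=n-1<n$. Both outcomes contradict $|R|\geq n$, which yields the $(2,1)$-coloured claw and, by the colour symmetry above, the $(1,2)$-coloured claw as well.

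The argument is essentially a degree count, so I do not anticipate a serious obstacle; the one point that needs care is the dichotomy created by the all-red vertices, which is precisely what the hypothesis $\min\{|R|,|B|\}\geq n$ is designed to destroy. It is also worth checking that the threshold $n\geq 5$ is exactly where both colour classes can simultaneously carry at least $n$ edges, since $\binom{n}{2}\geq 2n$ holds iff $n\geq 5$.
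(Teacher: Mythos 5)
Your proof is correct and takes essentially the same route as the paper: both rest on the observation that a balanced claw is just a vertex with two edges of one colour and at least one of the other, both use the same extremal star colouring (up to swapping colours) for the lower bound, and both use $\min\{|R|,|B|\}\geq n$ in an elementary degree count to force such a centre vertex. The only difference is organizational --- the paper runs a direct search (if the blue-degree-$\geq 2$ vertex has no red edge then all its edges are blue, and so on until every vertex has blue degree $\geq 2$ and any red edge finishes), whereas you argue by contradiction via the all-red versus red-degree-$\leq 1$ dichotomy and a global edge count; the substance is the same.
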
 

 \begin{proof}
\noindent {\em Lower bound:} We consider $K_n$ and fix a vertex $v$ in it, colouring all edges incident with $v$ blue and all other edges red.

 Clearly no  $K_{1,3}$  with two blue edges and one red edge exists, and hence $\sbal(n,K_{1,3}) \geq n-1$.

 \medskip

\noindent {\em Upper bound:} We now prove that if $\min \{ |R|  ,|B| \} \geq n$ (which is possible as $n \geq 5$),  there must be a $K_{1,3}$ with two blue edges and one red edge, as well as one with two red edges and one blue edge. 

So  let $n \geq 4$   and consider the blue edges. Since there are at least $n$ blue edges there must be a vertex $v$  incident with at least two blue edges.  Now if $v$ is also incident to a red edge we have a copy of $K_{1,3}$ with two blue edges and one red edge.

Hence all edges incident with $v$ are blue and hence all vertices of $K_n$ are incident to at least one blue edge.  But as there are at least $n$ blue edges there must be another vertex $u$ already incident to $v$ such that $uv$ is blue, but also incident with another vertex $w$ with $uw$ coloured blue.  Now if $u$ is also incident to a red edge we are done with $K_{1,3}$ centered at $u$ having two blue edges and one red edge.  Otherwise all edges incident to $u$  are blue and it follows that all vertices of $K_n$ have degree at least 2 in the blue graph. But there must be a red edge incident with some vertex $z$, and we have $K_{1,3}$ centred at $z$ with two blue edges and one red edge.  The case for $K_{1,3}$ with two red and one blue edge follows by symmetry atrting with a $K_{1,3}$ with at least two red edges.

\end{proof}

\noindent Finally we shall find this simple observation about graphs on three edges useful for results given in the tables.

\begin{observation} \label{3edges}

If $G$ has three edges then $G$ is balanceable.

\end{observation}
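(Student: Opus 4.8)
The plan is to exploit the fact that, for a graph $G$ with exactly three edges, a \emph{balanced} copy (colour classes differing by one, i.e.\ a $(2,1)$- or $(1,2)$-coloured copy) is precisely the same thing as a \emph{non-monochromatic} copy. Hence it suffices to find a threshold---in fact I expect the constant $0$, so that $\bal(n,G)=0$ for all large $n$---such that as soon as a $2$-colouring of $K_n$ uses at least one edge of each colour, it already contains a copy of $G$ carrying both colours. As a first step I would record the auxiliary fact that, whenever both colours occur, some vertex $v$ is incident with both a red edge $vr$ and a blue edge $vb$: take any red edge $ab$ and any blue edge $cd$; if they meet we are done, and otherwise the cross edge $ac$ settles it, since if $ac$ is red then $c$ is bichromatic (with $cd$), while if $ac$ is blue then $a$ is bichromatic (with $ab$).

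The second step is an embedding around such a vertex $v$. I would enumerate the five graphs on three edges, namely $K_3$, $P_4$, $K_{1,3}$, $K_{1,2}\cup K_2$ and $3K_2$. Each of the first four has a vertex of degree at least $2$; placing that vertex at $v$ and using $vr$ and $vb$ as two of its incident edges already forces the copy to be non-monochromatic, whatever the colour of the remaining (at most one) edge, which can be embedded on any unused vertices since $n$ is large. Concretely, for $K_3$ take the triangle $vrb$; for $P_4$ take the path $r$--$v$--$b$--$w$ with $w$ fresh; for $K_{1,3}$ centre the star at $v$ with leaves $r,b$ and a fresh $w$; and for $K_{1,2}\cup K_2$ take the path $r$--$v$--$b$ together with any edge on two further vertices. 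In every case the resulting copy contains a red and a blue edge, hence is balanced.

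The one case requiring separate treatment---and the only real obstacle---is $3K_2$, which has no vertex of degree $\geq 2$, so the bichromatic vertex cannot be exploited directly. Here I would instead produce a red edge and a blue edge that are \emph{vertex-disjoint}: if every red edge met every blue edge, then fixing one blue edge $cd$ would force every red edge to be incident with $c$ or $d$, and fixing one red edge $ab$ would force every blue edge to be incident with $a$ or $b$, giving $|R|,|B|\le 2n-3$ and hence $\binom{n}{2}=|R|+|B|\le 4n-6$, which is false for large $n$. A disjoint red and blue edge, together with any further edge on the remaining $n-4\ge 2$ vertices, then form a non-monochromatic $3K_2$. I expect the verification for $3K_2$ to absorb most of the care, the four connected cases being immediate from the embedding above. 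As a cross-check, $P_4$ and $3K_2$ are amoebas and $K_{1,3}$ is strongly balanceable by Theorem~\ref{sbalk13}, so the balanceability of those three graphs also follows from Theorem~\ref{ambal}, confirming the conclusion in all five cases.
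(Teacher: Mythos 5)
Your proposal is correct and follows essentially the same route as the paper: locate a vertex $v$ incident with both a red and a blue edge, complete that bichromatic $K_{1,2}$ to a copy of $G$ for the four three-edge graphs having a vertex of degree at least two, and treat $3K_2$ separately by producing a vertex-disjoint red edge and blue edge plus a third edge avoiding both. The only divergence is in the $3K_2$ case, where you obtain the disjoint bichromatic pair by a counting contradiction ($|R|,|B|\le 2n-3$, so $\binom{n}{2}\le 4n-6$, which needs roughly $n\ge 8$), whereas the paper gets it more cheaply for $n\ge 6$ by taking any edge $ab$ disjoint from the cherry $\{v,x,y\}$ and pairing $ab$ with whichever of $vx$, $vy$ carries the opposite colour.
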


\begin{proof}
Suppose $\min\{|R|,|B|\} \geq 1$ then clearly there is a vertex $v$ in $K_n$  incident with a red edge say $vx$  and blue edge say $vy$.  So no matter how we complete this red and blue coloured $K_{1,2}$  to any of the graphs on 3 edges except for $3K_2$, we have a balanced copy of $G$.  If $G  = 3K_2$  and $n \geq 6$ then as before there is red and blue coloured $K_{1,2}$  with vertices $\{v; x ,y\}$.  Since $n \geq 6$   there is an edge  $ab$ disjoint from $\{v ,x ,y\}$  hence without loss of generality we may assume that edge $vx$  is red and edge $ab$  is blue.  Since $n \geq 6$, every edge disjoint from $\{a ,b ,v ,x\}$ gives a balanced $3K_2$.
\end{proof}

\section{The Triple Property and Union of Bipartite Graphs }

It is well known \cite{2018arXivCHM}  that there are bipartite graphs which are not balanceable  however the following theorem shows an interesting property:  if $G$ and $H$ are any  bipartite graphs   with $e(G) = e(H)$  then $G \cup H$  is balanceable and even more.  The example following this theorem shows a direct use of this theorem in the case where $G  =  2C_{4t +2}$, while it is  known   that   $C_{4t +2}$ is non-balanceable \cite{2018arXivCHM}.

We next develop the triple property which together with  Theorem \ref{thm:ramsey} allows us to compute $\sbal(n, (2t-1)K_2)$,   $\bal( n ,2tK_2)$ and $\bal( n, (2t +1)K_2)$  in one stroke.

 \begin{theoremnum} \label{thm:ramsey}
Suppose $G$  and $H$ are bipartite graphs such that $e(G) = e(H)$ and that  $n \geq |G|+|H|+ R(G,H)$  where $R(G,H)$ is the Ramsey number for a red copy of $G$ or a blue copy of $H$.  Then, for $n \geq n_0$,  $\bal(n,G \cup H)  \leq  \max\{  \ex(n,G) , \ex(n,H)  \}$.
\end{theoremnum}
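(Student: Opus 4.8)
The plan is to realize a balanced copy of $G\cup H$ as the vertex-disjoint union of a \emph{red} copy of $G$ and a \emph{blue} copy of $H$. Since $e(G)=e(H)$, such a configuration carries exactly $e(G)$ red and $e(H)=e(G)$ blue edges, so the $2e(G)$ edges of $G\cup H$ split evenly and the copy is balanced. By the definition of $\bal$, proving the claimed bound amounts to showing that whenever $\min\{|R|,|B|\}>\max\{\ex(n,G),\ex(n,H)\}$ there is such a disjoint pair; this is what I would aim to produce.

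First I would extract the two monochromatic structures separately, using the Tur\'an hypothesis on each colour. Because $|R|>\ex(n,G)$, the red graph contains a copy of $G$; fix one on a vertex set $S$ with $|S|=|G|$. Because $|B|>\ex(n,H)$, the blue graph contains a copy of $H$; fix one on a vertex set $S'$ with $|S'|=|H|$. These two copies may overlap, so at this stage they need not form a valid disjoint pair; all I record is that together they occupy at most $|G|+|H|$ vertices.

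The key step is then to set $U=V(K_n)\setminus(S\cup S')$ and apply the Ramsey hypothesis inside $U$. From $n\geq |G|+|H|+R(G,H)$ we get $|U|\geq n-|G|-|H|\geq R(G,H)$, so the colouring restricted to $U$ contains either a red copy of $G$ or a blue copy of $H$. In the first case this red $G$ lies entirely inside $U$, hence is disjoint from $S'$, hence disjoint from the blue $H$ fixed on $S'$; pairing them yields the desired balanced $G\cup H$. In the second case the blue $H$ lies inside $U$, disjoint from $S$ and therefore from the red $G$ on $S$; again we obtain a balanced copy. Either way we are done.

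The conceptual obstacle --- and the point where the argument is genuinely clever rather than routine --- is this final step: a single red $G$ (or single blue $H$) produced in the leftover set $U$ already suffices, because it automatically pairs disjointly with whichever of the two pre-fixed structures it does not need to avoid. This is precisely what lets us sacrifice only $|G|+|H|$ vertices before invoking Ramsey, instead of having to control how many blue edges survive after deleting a red $G$; for sparse bipartite targets such as $tK_2$ that deletion count is far too lossy to recover the sharp bound $\max\{\ex(n,G),\ex(n,H)\}$. The only routine points I would still verify are that a vertex-disjoint union of a copy of $G$ and a copy of $H$ is indeed a copy of $G\cup H$, and that the stated bound on $n$ (together with $n\geq n_0$) keeps the relevant Tur\'an numbers in the intended regime.
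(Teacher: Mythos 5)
Your proposal is correct and follows essentially the same route as the paper's proof: extract a red copy of $G$ and a blue copy of $H$ from the Tur\'an bounds, delete the at most $|G|+|H|$ vertices they occupy, and apply the Ramsey hypothesis to the leftover set so that whichever monochromatic copy appears there pairs disjointly with the pre-fixed copy of the other colour. The only cosmetic difference is that the paper first disposes of the trivial case where the two initial copies are already vertex-disjoint, whereas you fold that case into the single Ramsey step.
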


\begin{proof}

Since $G$ and $H$ are bipartite graphs and it is well-known that  $\ex(n,G)$ and $\ex(n,H)$  are sub-quadratic \cite{furedi2013history},  it follows that, for $n$ large enough, if $\min \{ |R|  ,  |B| \}  >  \max\{ \ex(n,G) , \ex(n,H)\}$  then a 2-edge-coloured copy of $K_n$ contains both a red copy of $G$ and a blue copy of $H$, \cite{bowen2019colored,2018arXivCHM,girao2019tur}.

Let $G_1$ be the red copy of $G$ and $H_1$  be the  blue copy of $H$.  If  $G_1$ and $H_1$ are vertex-disjoint we are done, having a balanced $G \cup H$  since $ e(G) = e(H)$.

Otherwise let  $S = V(G_1) \cup  V(H_1)$.  Clearly $|S| < |V(G)|+ |V(H)|$.  Let  $X = V(K_n) \backslash S$. Then $|X| \geq  R(G,H)$  and hence in the induced colouring on $X$ there is either a red copy of $G$ or a blue copy of $H$ (or both).

If there is a blue copy of $H$ take it with $G_1$, and if there is a red copy of $G$ take it with $H_1$ and in both cases we get a balanced $G \cup H$.  
\end{proof}
 
We now give an example of the applicability of this Theorem.

\begin{Ex}
An illustration of  Theorem \ref{thm:ramsey}.
\end{Ex}

It is known that  $C_{4n +2}$  is not balanceable   by  remark 2.9  in \cite{2018arXivCHM}.  Also  $R(C_{4t +2}, C_{4t +2}  ) =   6t +2$ by a result in \cite{faudree1974all}.  Also Turan numbers for even cycles are bounded above by  $\ex(n,C_{2k}) \leq  (k - 1)n^{1+1/k}+ 16(k - 1)n$, a result proved in \cite{pikhurko2012note}.

So applying Theorem \ref{thm:ramsey}, together with these facts, we get the following:

Let $G =  2C_{4t +2}$  then $G$  is balanceable  and for $n \geq 14t+6$,  \[\bal(n,G)  \leq  \ex(n,C_{4t +2})  \leq    (4t+1)n^{1+1/(4t+2)}+ 16(4t+ 1)n.\]
 
\bigskip
 
We define the following property:  Let $G$, $H$ and $F$ be three graphs such that $e(G)$ is odd, $H$ is obtained from $G$ by adding a new edge, and $F$ is obtained from $H$ by adding another new edge.  We say that $(G,H,F)$ has \emph{the triple property} if there is some $n_0$ such that for $n \geq n_0$, $\sbal(n,G)=\bal(n,H)=\bal(F,n)$.

\begin{observation} \label{triplep}
Let $G=(2t-1)K_2$, $H=2tK_2$ and $F=(2t+1)K_2$. Then $(G,H,F)$ has the triple property.
\end{observation}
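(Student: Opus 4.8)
The plan is to deduce the required three-fold equality from the cyclic chain of inequalities
\[
\sbal(n,G)\ \le\ \bal(n,H)\ \le\ \bal(n,F)\ \le\ \sbal(n,G),
\]
each valid for all sufficiently large $n$, where $G=(2t-1)K_2$, $H=2tK_2$ and $F=(2t+1)K_2$. Since $tK_2$ is an amoeba, Theorem \ref{ambal} guarantees that all three quantities are finite for large $n$, so once the chain is in place it collapses to a single value. Throughout I would write a matching with $r$ red and $b$ blue edges as an \emph{$(r,b)$-coloured $(r+b)K_2$}, recalling that a balanced $H$ is a $(t,t)$-coloured $2tK_2$, a balanced $F$ is a $(t+1,t)$- or $(t,t+1)$-coloured $(2t+1)K_2$, and strong balance of $G$ asks for both a $(t,t-1)$- and a $(t-1,t)$-coloured $(2t-1)K_2$.

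The first two inequalities are immediate by edge-deletion. If $\min\{|R|,|B|\}>\bal(n,H)$ then there is a balanced $H$; deleting one of its red edges yields a $(t-1,t)$-coloured $G$ and deleting one of its blue edges yields a $(t,t-1)$-coloured $G$, so both copies required for strong balance are present and $\sbal(n,G)\le\bal(n,H)$. Similarly, if $\min\{|R|,|B|\}>\bal(n,F)$ then a balanced $F$ exists, and deleting one edge of its majority colour leaves a $(t,t)$-coloured $2tK_2$, that is a balanced $H$; hence $\bal(n,H)\le\bal(n,F)$.

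The substance of the proof is the closing inequality $\bal(n,F)\le\sbal(n,G)$: I must show that any colouring with $\min\{|R|,|B|\}>\sbal(n,G)$ contains a balanced $F$. Such a colouring contains both a $(t,t-1)$-coloured copy $M_1$ and a $(t-1,t)$-coloured copy $M_2$ of $(2t-1)K_2$. I would fix $M_1$, which occupies $4t-2$ vertices, and let $L_1$ be the complete graph induced on the remaining $n-4t+2$ vertices. If $L_1$ contains a blue edge $e_1$, then since $|L_1|\ge 4$ for large $n$ there is a further edge $e_2$ of $L_1$ disjoint from $e_1$; adjoining $e_1,e_2$ to $M_1$ produces either a $(t+1,t)$- or a $(t,t+1)$-coloured $(2t+1)K_2$ according as $e_2$ is red or blue, and in both cases this is a balanced $F$.

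The one remaining, and genuinely harder, case is when $L_1$ is a monochromatic red clique, so that every blue edge of $K_n$ meets $V(M_1)$. Here the second strongly balanced copy earns its keep: the $t$ blue edges of $M_2$ form a blue matching of size $t$, and because $L_1$ is all red each of these edges has an endpoint in $V(M_1)$, so at most $t$ of their $2t$ endpoints lie in $L_1$. Deleting those endpoints leaves a red clique on at least $n-5t+2$ vertices, which for large $n$ still contains $t+1$ independent red edges; these, together with the $t$ blue edges of $M_2$, form a $(t+1,t)$-coloured $(2t+1)K_2$, again a balanced $F$. This closes the chain and hence proves the equality. I expect the monochromatic-leftover case just described to be the main obstacle, since it is precisely where the extra information supplied by the \emph{second} strongly balanced copy of $G$ (rather than a single balanced copy) becomes indispensable, and where one must use that the untouched part of $K_n$ is large enough to still host a red matching of size $t+1$.
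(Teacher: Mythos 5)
Your proof is correct, but it takes a genuinely different route from the paper's. The paper establishes the two equalities separately: it proves $\bal(n,H)=\bal(n,F)$ (both directions by deleting/adding a single edge) and then $\sbal(n,G)=\bal(n,H)$, where the hard direction is $\bal(n,H)\le\sbal(n,G)$ --- constructing an exactly $(t,t)$-coloured $2tK_2$ from the two strongly balanced copies. Because the target there is the \emph{even} matching $H$, the paper cannot finish as soon as it finds a blue edge outside the first copy; it must perform edge swaps and run an extra case analysis (blue edges meeting the red edges $e_1,\dots,e_t$ but avoiding the blue edges $f_1,\dots,f_{t-1}$) in order to conclude that every blue edge meets $V(f_1)\cup\dots\cup V(f_{t-1})$, which is what lets it enlarge the outside clique $S$ to a red clique $L$ on $V(S)\cup V(e_1)\cup\dots\cup V(e_t)$ before invoking the second copy. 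You instead close a cyclic chain of three inequalities, $\sbal(n,G)\le\bal(n,H)\le\bal(n,F)\le\sbal(n,G)$, so your hard step targets the \emph{odd} matching $F$, and the parity slack of $2t+1$ edges does real work: if any blue edge exists outside $M_1$, adjoining it together with \emph{any} second disjoint outside edge gives a balanced $F$ regardless of that edge's colour, which eliminates the paper's intermediate case analysis entirely; in the all-red case your argument runs parallel to the paper's endgame (blue matching of $M_2$ has at most $t$ endpoints outside, leaving a large red clique). The trade-off is quantitative: you only use that every blue edge meets $V(M_1)$ (a set of $4t-2$ vertices) rather than the paper's sharper conclusion about the $2t-2$ vertices of the $f_j$'s, so you need roughly $n\ge 7t$ where the paper gets by with $n_0=5t+2$; since the triple property only demands \emph{some} $n_0$ (and Theorem \ref{cor:ramsey} assumes $n\ge 7t-1$ anyway), this costs nothing. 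Your explicit appeal to Theorem \ref{ambal} to guarantee that all three parameters exist before collapsing the chain is also a point the paper leaves implicit.
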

\begin{proof}

 Clearly $\bal(n,F) \geq \bal(n,H)$, for suppose we have a colouring of $E(K_n)$ with $\min\{|R|,|B|\} \geq \bal(n,F)+1$.  Then by definition there is a balanced $(r,b)$-coloured copy of $F$ with either $r=t+1$ and $b=t$ or $r=t$ and $b=t+1$.  Ignoring an edge with the most frequent colour gives a balanced colouring of $H$.

Conversely, suppose we have a colouring of $E(K_n)$ with $\min\{|R|,|B|\} \geq \bal(n,H)+1$.  Then by definition there is a balance $(r,b)$-coloured copy of $H$ with $r=b=t$.  If we take $n>n_0= 4t+2$, the edges of the balanced copy of $H$ cover $4t$ vertices, but there is at least one further edge independent from all these $2t$ edges, and no matter what the colour of this edge is, we can add it to $H$ to get a balanced $F$. Therefore $\bal(n,F) \leq \bal(n,H)$.

We now need to show that $\bal(n,H) \geq \sbal(n,G)$.  Suppose we have a colouring of $E(K_n)$ with $\min\{|R|,|B|\} \geq \bal(n,H)+1$.  Then by definition there is a balanced copy of $H$, and we can drop either a red or a blue edge to get a balanced copy of $G$.

For the converse  consider $n_0   =  5t + 2   =  n(G) + t +4$.  Suppose we have a colouring of $E(K_n)$  with  $\min\{ |R|  ,|B| \}  \geq  \sbal(n,G) +1$. Then by definition there is either a $  (t, t-1)$-coloured copy of $G$ or a $( t-1,t)$-coloured copy of $G$.

 Consider a $( t , t-1)$-coloured copy of $G$.  Let the t red edges be $e_1, \ldots,e_t$   and the $t-1$ blue edges be $ f_1,\ldots,f_{ t-1}$.  Let    $S$ be the complete graph induced by  $V(K_n)  \backslash  V(G)$; clearly  $|V(S)| \geq t + 4$  since $n_0 \geq  5t +2$.   Clearly all edges in $S$ must be red for otherwise we can add a blue edge to get a balanced copy of $H$.

If there is a blue edge not incident with any of $f_1, \ldots,f_{t-1}$,  then  either it is adjacent to two red edges of $e_1,\ldots,e_t$, or there is a blue edge adjacent with an edge from $e_1,\ldots,e_t$ and an edge from $S$.  In the first case  we drop these two red edges and add the blue edge and two independent red edges from  $S$  since $|S)| \geq t + 4$.  In the second case  we drop the red edge and replace it by a red edge from $S$ not incident with the blue edge and add also the blue edge to get balanced $H$.

So we know that $S$ contains only red edges, therefore together with $e_1,\ldots, e_t$, we have a graph $L$ on at least $3t+4$ vertices, all of whose edges are red.

So we may conclude that there are remaining blue edges  and  that each is adjacent to at least one blue edge from $f_1, \ldots, f_{t-1}$.

Now take an $( t-1 , t )$-coloured  copy of $G$.   The $t$ blue edges  are incident with at most $t$ vertices from $L$,  leaving in $L$ at least $2t +4$ vertices from which we can choose $t$ red edges  not adjacent with the $t$ blue edges of $G$ and we get a balanced $H$.
\end{proof}

\begin{theoremnum} \label{cor:ramsey}

For $ n \geq  7t-1 $,   $\sbal(n, (2t - 1)K_2) =  \bal(n, 2tK_2)  = \bal( n,(2t +1)K_2 ) =  \ex(n,tK_2)  =  \binom{ t-1}{2} + (t-1)(n-t+1)$.

 \end{theoremnum}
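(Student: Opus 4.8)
The plan is to collapse the whole chain of equalities onto a single computation of $\bal(n,2tK_2)$, using the already-established triple property together with a Turán–Ramsey sandwich. First I would invoke Observation \ref{triplep}, which asserts precisely that $\big((2t-1)K_2,\,2tK_2,\,(2t+1)K_2\big)$ has the triple property; its proof is valid once $n\geq 5t+2$, and since $7t-1\geq 5t+2$ for $t\geq 2$, throughout the claimed range we already have
\[
\sbal(n,(2t-1)K_2)=\bal(n,2tK_2)=\bal(n,(2t+1)K_2).
\]
It therefore suffices to prove $\bal(n,2tK_2)=\ex(n,tK_2)$ and then to evaluate the Turán number in closed form.

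For the upper bound I would apply Theorem \ref{thm:ramsey} with $G=H=tK_2$. Matchings are bipartite and $e(tK_2)=t$, so $G\cup H=2tK_2$ and the hypothesis becomes $n\geq |tK_2|+|tK_2|+R(tK_2,tK_2)$. Here $|tK_2|=2t$, and by the Cockayne--Lorimer formula $R(tK_2,tK_2)=3t-1$, so the hypothesis reads exactly $n\geq 4t+(3t-1)=7t-1$ --- which is where the threshold in the statement comes from. (For matchings the auxiliary large-$n$ condition $n\geq n_0$ in Theorem \ref{thm:ramsey} is automatic, since $\min\{|R|,|B|\}>\ex(n,tK_2)$ forces both a red and a blue $tK_2$ directly from the definition of $\ex$.) Theorem \ref{thm:ramsey} then gives $\bal(n,2tK_2)\leq \max\{\ex(n,tK_2),\ex(n,tK_2)\}=\ex(n,tK_2)$.

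For the lower bound I would exhibit the Erd\H{o}s--Gallai extremal colouring: take a graph $R$ on $n$ vertices with matching number $t-1$ and the maximum $\ex(n,tK_2)$ edges --- concretely, $t-1$ vertices completely joined to one another and to an independent set of size $n-t+1$ --- colour its edges red and all remaining edges blue. Since $\ex(n,tK_2)$ is linear in $n$ while $\binom{n}{2}$ is quadratic, for $n\geq 7t-1$ we have $|R|\leq|B|$, so $\min\{|R|,|B|\}=\ex(n,tK_2)$. A balanced copy of $2tK_2$ consists of $2t$ pairwise-independent edges split evenly, hence would require $t$ pairwise-independent red edges; but the red graph has matching number $t-1$, so no balanced copy can occur. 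Thus $\bal(n,2tK_2)\geq \ex(n,tK_2)$, and with the upper bound we conclude $\bal(n,2tK_2)=\ex(n,tK_2)$. Finally I would evaluate the Turán number by the Erd\H{o}s--Gallai theorem for matchings, $\ex(n,tK_2)=\max\big\{\binom{2t-1}{2},\,\binom{t-1}{2}+(t-1)(n-t+1)\big\}$, and note that the second term dominates once $n\geq \tfrac{5t}{2}-1$, which is implied by $n\geq 7t-1$, giving the stated closed form $\binom{t-1}{2}+(t-1)(n-t+1)$.

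The genuinely substantive ingredients are all borrowed (Observation \ref{triplep} and Theorem \ref{thm:ramsey}), so the main work is bookkeeping: reconciling the three independent thresholds --- the triple-property bound $5t+2$, the Ramsey hypothesis $7t-1$, and the Erd\H{o}s--Gallai crossover $\tfrac{5t}{2}-1$ --- to confirm that $7t-1$ is the binding one, and the clean check that the red extremal graph blocks every balanced $2tK_2$. I expect the threshold arithmetic to be the most error-prone step (an off-by-one in $R(tK_2,tK_2)$ or the crossover would shift everything), and the degenerate case $t=1$ should be verified separately, since there $5t+2>7t-1$ and the triple-property threshold is not dominated.
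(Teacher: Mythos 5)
Your proposal is correct and follows essentially the same route as the paper's own proof: reduce to $\bal(n,2tK_2)$ via the triple property (Observation \ref{triplep}), get the upper bound from Theorem \ref{thm:ramsey} with $G=H=tK_2$ and the Cockayne--Lorimer value $R(tK_2,tK_2)=3t-1$, get the lower bound from the Erd\H{o}s--Gallai extremal colouring whose red graph has matching number $t-1$, and evaluate $\ex(n,tK_2)$ in closed form. If anything, you are more careful than the paper on the bookkeeping (verifying $|R|\leq|B|$ in the extremal colouring, the crossover threshold, and the $t=1$ degeneracy), but the substance is identical.
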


\begin{proof}

It suffices to prove  that $\bal(n,2tK_2 ) = \ex(n,tK_2 )$   for $n  =  7t -1$   (by the former triple property all other equality signs were proved).

From  the theorem above we know  that $\bal(n,2tK_2) \leq \ex(n,tK_2)$.   However in this case  the reverse inequality holds as well  because we take  $K_n$ and colour its edges with $|R|  = \ex(n,tK_2)$ forming the extremal graph for $tK_2$  (not having $tK_2$)  .

If there is a balanced $2tK_2$ it must contains a red $tK_2$ which  is a contradiction.

We observe an old result of Erdos and  Gallai \cite{erdHos1959maximal}   \[\ex(n,tK_2 )   = \max \{  \binom{2t  - 1}{2}, \binom{ t  - 1}{2} + ( t-1 )(n-t +1)  \} =  \binom{t-1}{2} +  (t-1)(n-t +1)\]  for $n \geq \frac{7t-6}{2}$.

Also, we observe that  $R(tK_2 , tK_2) =  3t  - 1$  by the classical result  in \cite{cockayne1975ramsey}.

Hence putting all these facts together we got the last required equality for $n \geq  2|V(G)| + R(G,G)  =  4t  + 3t  - 1  = 7t  - 1$.

\end{proof}

\newpage

\section{Tables}

We can now give, in this section, the values of $\ot(n,G)$, $\bal(n,G)$ and  $\sbal(n,G)$, when they exist, for all graphs $G$ on at most four edges.

\bigskip

\begin{threeparttable}
\begin{tabular}{lllll p{1 in}}
	Graphs & Amoeba & Omnitonal & ot$(n,G)$ & Valid $n$ & Comments \\ \hline
	$4K_2$ & Y & Y & ex$(n,G)$ & $n \geq n_0$ & Theorem \ref{bipam}\\
	$2K_2\cup K_{1,2}$ & Y & Y & ex$(n,G)$ & $n \geq n_0$ &  Theorem \ref{bipam}\\
	$2K_{1,2}$ & Y & Y & ex$(n,G)$ & $n \geq n_0$ &  Theorem \ref{bipam}\\
	$K_2\cup P_4$ & Y & Y & ex$(n,G)$ & $n \geq n_0$ &  Theorem \ref{bipam}\\
	$P_5$ & Y & Y & ex$(n,G)$ &$n \geq n_0$ &  Theorem \ref{bipam}\\
	$K_{1,3}$ with extended leaf & Y & Y & ex$(n,G)$ & $n \geq n_0$ &  Theorem \ref{bipam}\\
	$K_2\cup K_3$ & N & N & & --- &  Theorem \ref{omnibip} \\
	$C_{4}$ & N  \tnote{1}& N & &--- &Lemma \ref{typecolouring}  \\	
	$K_{1,3}\cup K_2$ & N \tnote{1} & Y & n & $n\geq 10$ & Theorem \ref{omnithm}\\
	$K_{1,4}$ & N \tnote{1}& Y & $2n-3$ & $n\geq 16$ & Theorem \ref{omnistar} \\
	$K_3 + e$ & Y & N & & --- & Theorem \ref{omnibip}\\ \hline
	$K_{1,3}$ & N  \tnote{1} & Y & $n$ & $n\geq 12$ & Theorem \ref{omnistar}\\
	$P_4$ & Y & Y & ex$(n,G)$ & $n \geq n_0$ & Theorem \ref{bipam}\\
	$K_3$ &N \tnote{1} &N & &--- &  Theorem \ref{omnibip} \\
	$3K_2$ &Y &Y & ex$(n,G)$ & $n \geq n_0$ & Theorem \ref{bipam}\\
	$P_3\cup K_2$ &Y &Y & ex$(n,G)$ & $n \geq n_0$ & Theorem \ref{bipam}\\ \hline

	$P_3$ & Y & Y & ex$(n,G)$ & $n \geq 3$ & Theorem \ref{bipam}\\
	$2K_2$ &Y &Y & ex$(n,G)$ & $n \geq n_0$ & Theorem \ref{bipam}\\
	\hline
\end{tabular}
\begin{tablenotes}
\item[1] \footnotesize{By Lemma \ref{lemmadeg}}
\end{tablenotes}
\caption{Amoebas and Omnitonal graphs on at most four edges}
\end{threeparttable}

\bigskip

\begin{threeparttable}
\begin{tabular}{lll p{1.5 in}}
	Graphs & bal$(n,G)$ & Valid $n$ & Comments\\ \hline
	$4K_2$  & $n-1$ & $n \geq 10$ & Theorem \ref{bal4k2} \\
	$2K_2\cup K_{1,2}$   & 1  & $n \geq 8$ & Theorem  \ref{bal2k2k12}\\
	$2K_{1,2}$ &  1& $n \geq 7$ & $\mbox{ }$\tnote{1} \\
	$K_2\cup P_4$ &1& $n \geq 7$ & $\mbox{ }$\tnote{1} \\
	$P_5$ & 1 & $n \geq 6$ & $\mbox{ }$\tnote{1}  \\
	$K_{1,3}$ with extended leaf  &  1 &  $n \geq 7$ & $\mbox{ }$\tnote{1} \\ 
	$K_2\cup K_3$ & 3 & $n \geq 7$ & Theorem \ref{balk2k3} \\
	$C_{4}$ &1&  $n \geq 4$ & $\mbox{ }$\tnote{1}\\	
	$K_{1,3}\cup K_2$  & $n-1$ & $n \geq 9$  & Theorem \ref{balk13k2} \\
	$K_{1,4}$ & $n-1$  & $n \geq 5 $ & Theorem E\\
	$K_3 + e$  &1 & $n \geq 5$ & $\mbox{ }$\tnote{1}\\ \hline
	$K_{1,3}$  &0& &Observation \ref{3edges}\\
	$P_4$  &0 & &Observation \ref{3edges}\\
	$K_3$  &0 & &Observation \ref{3edges}\\
	$3K_2$  &0 & & Observation \ref{3edges}\\
	$P_3\cup K_2$  & 0& &Observation \ref{3edges}\\
	\hline
	\hline

\end{tabular}	
\begin{tablenotes}
\item[1] \footnotesize{The proofs are in nature very similar to the proof of Theorem \ref{bal2k2k12} and are left to the interested reader to verify.}
\end{tablenotes}
\caption{Balanced graphs on at most four edges}
\end{threeparttable}

\bigskip

\begin{threeparttable}
\begin{tabular}{llll p{1 in}}
	Graphs  & Strong balanced & sbal$(n,G)$ & Valid $n$ & Comments \\ \hline
	$K_{1,3}$    & Y   & $n-1$& $n \geq 4$ & Theorem \ref{sbalk13}\\
	$P_4$ &  Y  & 1  & $n \geq 7$ & Theorem \ref{ambal}  \\
	$K_3$  &  N & & &  Lemma \ref{typecolouring} \\
	$3K_2$  & Y  & $n-1$  & $n \geq 7$ & Theorem \ref{cor:ramsey}\\
	$P_3\cup K_2$  &Y  & 1  &  $n \geq 7$ & Theorem \ref{ambal}\\

	\hline

\end{tabular}	
\caption{Strongly balanced graphs on at most four edges}
\end{threeparttable}
\section{Conclusion}

In this paper, by computing the values of $\bal(n,G)$, $\sbal(n,G)$ and $\ot(n,G)$ for all graphs $G$ on at most four edges we have tried to convey the flavour of the results in \cite{ 2018arXivCHM} and the techniques used to obtain them.  We have also tried to obtain some new techniques which could shed more insight on these problems.  We hope that this paper will be an invitation to the interested reader to delve into \cite{2018arXivCHM} for a more comprehensive treatment of balanceable and omnitonal graphs.

\section*{Acknowledgement}

We would like to thank an anonymous referee whose careful reading of the paper helped us improve it considerably.

\nocite{caro2016ero}
\nocite{caro2019zero1}
\nocite{caro2019zero2}

\bibliographystyle{plain}
\bibliography{mbmbib1}

\begin{thebibliography}{10}

\bibitem{augspurger2016avoiding}
C.~Augspurger, M.~Minter, K.~Shoukry, P.~Sissokho, and K.~Voss.
\newblock Avoiding zero-sum subsequences of prescribed length over the
  integers.
\newblock {\em arXiv preprint arXiv:1603.03978}, 2016.

\bibitem{berger2019analogue}
A.~Berger.
\newblock An analogue of the {E}rd{\H{o}}s--{G}inzburg--{Z}iv theorem over
  $\mathbb{Z}$.
\newblock {\em Discrete Mathematics}, 342(3):815--820, 2019.

\bibitem{bollobas2004extremal}
B.~Bollob{\'a}s.
\newblock {\em Extremal graph theory}.
\newblock Courier Corporation, 2004.

\bibitem{bowen2019colored}
M.~Bowen, A.~Hansberg, A.~Montejano, and A.~M{\"u}yesser.
\newblock Colored unavoidable patterns and balanceable graphs.
\newblock {\em arXiv preprint arXiv:1912.06302}, 2019.

\bibitem{caro2019amoeba}
Y.~Caro, A.~Hansberg, and A.~Montejano.
\newblock Amoebas.
\newblock {\em in preparation}, 2019.

\bibitem{2018arXivCHM}
Y.~Caro, A.~Hansberg, and A.~Montejano.
\newblock {Unavoidable chromatic patterns in 2-colorings of the complete
  graph}.
\newblock {\em arXiv e-prints}, page arXiv:1810.12375, 2019.

\bibitem{caro2019zero2}
Y.~Caro, A.~Hansberg, and A.~Montejano.
\newblock Zero-sum ${K}_m$ over $\mathbb{Z}$ and the story of ${K}_4$.
\newblock {\em Graphs and Combinatorics}, 35(4):855--865, 2019.

\bibitem{caro2019zero1}
Y.~Caro, A.~Hansberg, and A.~Montejano.
\newblock Zero-sum subsequences in bounded-sum $\{- 1, 1\}$-sequences.
\newblock {\em Journal of Combinatorial Theory, Series A}, 161:387--419, 2019.

\bibitem{caro2016ero}
Y.~Caro and R.~Yuster.
\newblock On zero-sum and almost zero-sum subgraphs over {$\mathbb{Z}$}.
\newblock {\em Graphs and Combinatorics}, 32(1):49--63, 2016.

\bibitem{cockayne1975ramsey}
E.J. Cockayne and P.J. Lorimer.
\newblock The ramsey number for stripes.
\newblock {\em Journal of the Australian Mathematical Society}, 19(2):252--256,
  1975.

\bibitem{erdHos1959maximal}
P.~Erd{\H{o}}s and T.~Gallai.
\newblock On maximal paths and circuits of graphs.
\newblock {\em Acta Mathematica Hungarica}, 10(3-4):337--356, 1959.

\bibitem{faudree1974all}
R.J. Faudree and R.H. Schelp.
\newblock All ramsey numbers for cycles in graphs.
\newblock {\em Discrete Mathematics}, 8(4):313--329, 1974.

\bibitem{furedi2013history}
Z.~F{\"u}redi and M.~Simonovits.
\newblock The history of degenerate (bipartite) extremal graph problems.
\newblock In {\em Erd{\H{o}}s Centennial}, pages 169--264. Springer, 2013.

\bibitem{girao2019tur}
A.~Gir{\~a}o and B.~Narayanan.
\newblock Tur{\'a}n theorems for unavoidable patterns.
\newblock {\em arXiv preprint arXiv:1907.00964}, 2019.

\bibitem{pikhurko2012note}
O.~Pikhurko.
\newblock A note on the {T}ur{\'a}n function of even cycles.
\newblock {\em Proceedings of the American Mathematical Society},
  140(11):3687--3692, 2012.

\bibitem{robertson2018zerosum2}
A.~Robertson.
\newblock Zero-sum analogues of van der waerden's theorem on arithmetic
  progressions.
\newblock {\em arXiv preprint arXiv:1802.03387}, 2018.

\bibitem{robertson2018zerosum1}
A.~Robertson.
\newblock Zero-sum generalized schur numbers.
\newblock {\em arXiv preprint arXiv:1802.03382}, 2018.

\bibitem{sun2019zero}
A.~Sun.
\newblock Zero-sum subsequences in bounded-sum $\{-r, s \}$-sequences.
\newblock {\em arXiv preprint arXiv:1907.06623}, 2019.

\bibitem{west2017introduction}
D.B. West.
\newblock {\em Introduction to Graph Theory}.
\newblock Math Classics. Pearson, 2017.

\end{thebibliography}

\end{document}